\theoremstyle{plain}
\numberwithin{equation}{section}
\newtheorem{thm}{Theorem}[section]
\newtheorem{cor}[thm]{Corollary}
\newtheorem{lem}[thm]{Lemma}
\newtheorem{prop}[thm]{Proposition}
\newtheorem{conj}{Conjecture}
\theoremstyle{definition}
\newtheorem{df}[thm]{Definition}
\newcommand{\mb}{\mathbb}
\newcommand{\mf}{\mathfrak}
\newcommand{\ml}{\mathcal}
\newcommand{\en}{{\rm End}}
\newcommand{\ch}{{\rm CH}}
\begin{document}
\author{Rin Sugiyama\footnote{The author is supported by JSPS Research Fellowships for Young Scientists.}\\ \textit{\small Graduate School of Mathematics, Nagoya University,}\\ \textit{\small Furo-cho, Chikusa-ku, Nagoya 464-8602, Japan}\\ {\small e-mail: rin-sugiyama@math.nagoya-u.ac.jp}
}
\title{Lefschetz classes of simple factors of Fermat Jacobian of prime degree over finite fields}
\date{}
\maketitle
\begin{abstract}
We give a necessary and sufficient condition in terms of a matrix for which all Tate classes are Lefschetz for simple abelian varieties over an algebraic closure of a finite field. As an application, we prove under an assumption that all Tate classes are Lefschetz for simple factors of Fermat Jacobian of prime degree.
\end{abstract}

\noindent{Mathematics Subject Classification(2010)}: 14G15, 11G25, 14H52\bigskip

\noindent{Keywords}: Abelian variety, Lefschetz class, Tate conjecture

\section{Introduction}
Let $p$ be a  prime number. Let $\mb{F}_p$ be a finite field with $p$-elements and let $\mb{F}$ be a fixed algebraic closure of $\mb{F}_p$. Let $\ell$ be a prime number different from $p$. Let $A_0$ be an abelian variety over a finite subfield $\mb{F}_q$ of $\mb{F}$ and let $A$ be the abelian variety $A_0\otimes_{\mb{F}_q}\mb{F}$ over $\mb{F}$. There is the cycle map 
$$cl_A^i: \ch^i(A)\otimes\mb{Q}_{\ell}\longrightarrow H^{2i}(A,\mb{Q}_{\ell}(i)),$$
where $\ch^i(A)$ is the Chow group of algebraic cycles on $A$ of codimension $i$ modulo rational equivalence, and $H^{2i}(A,\mb{Q}_{\ell}(i))$ is the $\ell$-adic \'etale cohomology of $A$. Then we know that the image of the map $cl_A^i$ is contained in the space $\ml{T}_{\ell}^i(A)$ of \textit{$\ell$-adic Tate classes} of degree $i$ on $A$ which is defined as follows:
$$\ml{T}_{\ell}^i(A):=\varinjlim_{L/\mb{F}_q}H^{2i}(A,\mb{Q}_{\ell}(i))^{{\rm Gal}(\mb{F}/L)}.$$
Here $L/\mb{F}_q$ runs over all finite extensions of $\mb{F}_q$. We call the elements of the image of $cl_A^i$ \textit{the algebraic classes} of degree $i$.
\begin{conj} For all $i\ge0$, 
${\rm Im}(cl_A^i)=\ml{T}_{\ell}^i(A).$
\end{conj}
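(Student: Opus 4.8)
The plan is to reduce Conjecture~1, via the structure theory of abelian varieties over $\mb{F}$, to a purely group-theoretic comparison between the space of Tate classes and a distinguished subspace of provably algebraic classes. First I would use that $A$ is isogenous to a product of simple factors, and that $\ell$-adic cohomology and the cycle map $cl_A^{\bullet}$ are compatible with products (K\"unneth) and with isogenies. Since both algebraicity and the Tate property are stable under these operations, the conjecture for a general $A$ follows once it is known for each simple isotypic factor and for the self-products $A^n$. This isolates the essential case of a simple abelian variety together with its powers.

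The anchor of the argument is the divisor case $i=1$, where ${\rm Im}(cl_A^1)=\ml{T}_{\ell}^1(A)$ is Tate's theorem (the Tate isogeny theorem together with semisimplicity of the Frobenius action). Granting this, every element of the subring of $\bigoplus_i H^{2i}(A,\mb{Q}_{\ell}(i))$ generated by divisor classes under cup product --- the ring of \emph{Lefschetz classes} $\ml{L}_{\ell}^{\bullet}(A)$ --- is automatically algebraic, because intersections of divisors are algebraic cycles. Hence it suffices to prove the inclusion $\ml{T}_{\ell}^i(A)\subseteq\ml{L}_{\ell}^i(A)$, that is, that every Tate class is Lefschetz.

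Next I would recast this inclusion group-theoretically. The Lefschetz classes are exactly the invariants of the \emph{Lefschetz group} $L(A)$, the algebraic subgroup of $GL(H^1(A,\mb{Q}_{\ell}))$ centralizing all endomorphisms and preserving a polarization, while the Tate classes are the invariants of the Zariski closure of the subgroup generated by the Frobenius endomorphism $\pi$ (the Frobenius torus $T$), acting on exterior powers of $H^1$ and twisted to weight zero. Because $\pi$ commutes with $\en(A)$ and respects the Weil pairing, one has $T\subseteq L(A)$, so invariants of $L(A)$ sit inside invariants of $T$, giving $\ml{L}_{\ell}^i(A)\subseteq\ml{T}_{\ell}^i(A)$ for free; the conjecture is the reverse inclusion, which holds in all degrees precisely when the invariant spaces of $T$ and $L(A)$ coincide. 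This is where the matrix criterion announced in the abstract enters: it translates equality of these invariant spaces into an explicit linear-algebra condition on the Frobenius eigenvalues relative to the type data of $A$.

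The hard part --- and the reason the statement remains a conjecture --- is that the inclusion $\ml{T}_{\ell}^i(A)\subseteq\ml{L}_{\ell}^i(A)$ can genuinely fail: there may exist \emph{exotic} Tate classes not generated by divisors, exactly as there are exotic Hodge classes on certain complex abelian varieties, so no uniform argument along these lines can succeed in general. My expectation is therefore to prove the conjecture not unconditionally but for the simple factors of the Fermat Jacobian of prime degree, by computing the Frobenius eigenvalues explicitly (they are the Jacobi sums attached to the Fermat curve) and verifying that the resulting matrix satisfies the no-exotic-classes criterion --- subject to the arithmetic assumption flagged in the abstract, which is the residual obstacle to removing all hypotheses.
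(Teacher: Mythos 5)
The statement you were asked about is Tate's conjecture itself, which the paper states but of course does not prove; your proposal correctly recognizes this and, rather than fabricating a general argument, reduces to the sufficient condition that all Tate classes be Lefschetz and outlines exactly the strategy the paper uses for the cases it does establish (Tate's theorem in degree $1$, Milne's comparison of the invariants of $P(A)\subset L(A)$, the matrix criterion, and the explicit Jacobi-sum computation of Frobenius for the Fermat Jacobian). This matches the paper's approach, including the caveat that the inclusion $\ml{T}_{\ell}^i(A)\subseteq$ (Lefschetz classes) can genuinely fail, so only the conditional and special-case results are actually provable.
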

This is conjectured by Tate \cite[Conjecture 1]{Ta}. In this paper, we call the conjecture the Tate conjecture. The Tate conjecture for $A$ implies the Tate conjecture for $A_0/\mb{F}_q$, that is, the cycle map $cl_{A_0}^i:\ch^i(A_0)\otimes\mb{Q}_{\ell}\longrightarrow H^{2i}(A,\mb{Q}_{\ell}(i))^{{\rm Gal}(\mb{F}/\mb{F}_q)}$ is surjective. On the other hand, Tate \cite{Ta1} proved that $cl_{A_0}^1$ is surjective for all abelian varieties over finite fields. Therefore all Tate classes of degree 1 are divisor classes on $A$. The elements of the $\mb{Q}_{\ell}$-subalgebra of $\displaystyle \ml{T}_{\ell}(A):=\bigoplus_{i\ge0}\ml{T}_{\ell}^i(A)$ generated by $\ml{T}_{\ell}^1(A)$ are called the \textit{the Lefschetz classes} on $A$. If all $\ell$-adic Tate classes on $A$ are Lefschetz, then the Tate conjecture holds for $A$. But there are examples where not all Tate classes are Lefschetz and the Tate conjecture holds (\cite[Example 1.8]{Mi3}). If $A$ is a product of elliptic curves, then all $\ell$-adic Tate classes on $A$ are Lefschetz by a result of Spiess \cite{Sp}. Zarhin \cite{Za}, Lenstra and Zarhin \cite{LZ} gave other example (c.f. \cite[A.7]{Mi3}). Kowalski \cite{Ko} proved that for certain simple ordinary abelian varieties, all $\ell$-adic Tate classes are Lefschetz.  In this paper, for a simple factor of the Jacobian of Fermat curve of prime degree, we give a necessary and sufficient condition for which all $\ell$-adic Tate classes are Lefschetz.  
 
Let $m$ be a positive integer prime to $p$. Let $J_m$ be the Jacobian of Fermat curve of degree $m$ defined by the equation
$$x_0^m+x_1^m+x_2^m=0 \subset \mb{P}^2_{\mb{F}_q}.$$
Shioda-Katsura \cite[Proposition 3.10]{SK} proved that $J_m$ is isogenous to a product of supersingular elliptic curves if and only if $p^{\nu}\equiv -1 \mod m$ for some $\nu$. If $m$ is a prime $l$ different from $p$, then the condition that $p^{\nu}\equiv -1 \mod l$ for some $\nu$ is equivalent to the condition that the residual degree $f$ of $p$ in $\mb{Q}(\mu_l)$ is even. Hence if $f$ is even, then a simple factor $A$ of $J_l$ is isogenous to a supersingular elliptic curve over $\mb{F}$. In this case, all $\ell$-adic Tate classes on all powers of $A$ are Lefschetz. In this paper, we consider in case that $f$ is odd.

To state main result, we recall Jacobi sum. Let $\alpha$ be an element of the set
$$\ml{A}_{l}^1:=\bigl\{ (a_0,a_1,a_2) \ | \ a_i \in \mb{Z}/l, \ a_i \not \equiv 0, \ a_0+a_1+a_2\equiv 0\bigr\}.$$
Then Jacobi sum $j(\alpha)$ is defined by 

$$j(\alpha)=-\sum_{\stackrel{1+v_1++v_2=0}{ \ v_i \in \mb{F}_{p^f}^{\times}}}\psi(v_1)^{a_1}\psi(v_2)^{a_2}.$$
Here $\psi$ is a fixed character of order $l$ of $\mb{F}_{p^f}^{\times}$. By definition, $j(\alpha) \in \mb{Q}(\mu_l)$ where $\mu_l$ is the set of $l$-th root of unity. We identify $(\mb{Z}/l)^{\times}$ with the Galois group of $\mb{Q}(\mu_l)/\mb{Q}$.

Let $A_0$ be an absolute simple factor of the Jacobian of $V_l$ over $\mb{F}_{p^f}$ and let $A:=A_0\otimes\mb{F}$. We denote by $C(A)$ the center of $\en_{\mb{F}}(A)\otimes \mb{Q}$. Let $\pi_0$ be the Frobenius endomorphism of $A_0$. Then $\pi_0$ can be given by Jacobi sum (cf. \cite{We}): $\pi_0=j(\alpha)$ for some $\alpha \in \ml{A}_{l}^1$. Therefore $C(A)=\mb{Q}(j(\alpha))$ and $C(A)\subset \mb{Q}(\mu_l)$. 

Our main result is the following:
\begin{thm}\label{t}
Let $l$ be an odd prime number different from $p$. Let $A$ be a simple factor of the Jacobian of the Fermat curve of degree $l$ over $\mb{F}$. Let $\alpha=(a_0,a_1,a_2)$ be an element of $\ml{A}_l^1$ such that $C(A)=\mb{Q}(j(\alpha))$. Let $H_{\alpha}$ be the Galois group of $\mb{Q}(\mu_l)/C(A)$. Then all $\ell$-adic Tate classes on all powers of $A$ are Lefschetz if and only if for any odd Dirichlet character $\chi$ of $(\mb{Z}/l)^{\times}$ with $\chi|_{H_{\alpha}}=1$, $\displaystyle \sum_{i=0}^2\chi(a_i)\neq0$.

In particular, all $\ell$-adic Tate classes on all powers of $A$ are Lefschetz in the following cases:

$(1)$ $[C(A):\mb{Q})] \not \equiv 0 \mod 6$,

$(2)$ $\alpha$ is equal to an element of the form $(a,a,b)$ up to permutation,

$(3)$ $[C(A):\mb{Q})]=2^{s+1}\cdot3$ $(s\geq0)$.
\end{thm}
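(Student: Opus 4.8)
The plan is to deduce Theorem~\ref{t} from the matrix criterion established above, which characterizes — for an arbitrary simple $A/\mb{F}$ — when all $\ell$-adic Tate classes on all powers of $A$ are Lefschetz in terms of the non-degeneracy of an explicit matrix attached to the Frobenius. Since $C(A)=\mb{Q}(j(\alpha))$ is a CM field and $\pi_0=j(\alpha)$ generates it, $A$ is of CM type, so the Tate classes on the powers $A^n$ are exactly the invariants of the Frobenius torus (the Zariski closure of $\langle\pi_0\rangle$) acting on tensor powers of $H^1$, while the Lefschetz classes are the invariants of the Lefschetz group. The two coincide precisely when every multiplicative relation among the Galois conjugates of $\pi_0$ is generated by the polarization relation $\sigma(\pi_0)\overline{\sigma(\pi_0)}=p^{f}$. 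The matrix criterion records these relations, so the first step is to translate ``all Tate classes Lefschetz'' into the statement that the matrix has no kernel beyond the one forced by complex conjugation.

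First I would make the Frobenius data explicit. Using Weil's computation of the conjugates of a Jacobi sum together with Stickelberger's theorem, the $\mf{p}$-adic valuations of $\sigma_t(j(\alpha))$ for $t\in(\mb{Z}/l)^{\times}/H_{\alpha}=\mathrm{Gal}(C(A)/\mb{Q})$ can be written down in closed form as sums of fractional parts $\langle ta_i/l\rangle$. These valuations are the entries of the relevant matrix, and since the indexing set is the abelian group $G:=\mathrm{Gal}(C(A)/\mb{Q})$, the matrix is a group (circulant-type) matrix and therefore diagonalizes over the Dirichlet characters of $G$, i.e.\ over the characters $\chi$ of $(\mb{Z}/l)^{\times}$ with $\chi|_{H_{\alpha}}=1$. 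A character-orthogonality computation then identifies the $\chi$-eigenvalue, up to an explicitly non-zero scalar, with $\sum_{i=0}^2\chi(a_i)$. The even characters reproduce exactly the relations coming from the polarization and contribute nothing exotic, whereas the odd characters govern the possible exotic Tate classes; hence the matrix criterion becomes precisely the asserted condition that $\sum_{i=0}^2\chi(a_i)\neq0$ for every odd $\chi$ with $\chi|_{H_{\alpha}}=1$, which yields the equivalence.

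It remains to derive the three sufficient conditions from the criterion, by analysing when $\sum_{i=0}^2\chi(a_i)=0$ for an odd $\chi$ trivial on $H_{\alpha}$. The key elementary fact is that three roots of unity sum to zero only if they equal $\zeta\{1,\omega,\omega^2\}$ for some root of unity $\zeta$ and a primitive cube root $\omega$; this forces $3\mid\mathrm{ord}(\chi)$, while oddness forces $2\mid\mathrm{ord}(\chi)$, so $6\mid\mathrm{ord}(\chi)$, and since $\mathrm{ord}(\chi)$ divides $[C(A):\mb{Q}]$ this already gives case $(1)$: if $6\nmid[C(A):\mb{Q}]$ no bad $\chi$ can exist. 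For case $(2)$, writing $\alpha=(a,a,b)$ with $b\equiv-2a$ and using $\chi(-1)=-1$, one computes $\sum_{i=0}^2\chi(a_i)=\chi(a)\bigl(2-\chi(2)\bigr)$, which is non-zero because $|\chi(2)|=1\neq2$; hence the criterion holds for every such $\alpha$.

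Case $(3)$ is the main obstacle, since here $6$ does divide $[C(A):\mb{Q}]=2^{s+1}\cdot3$ and the crude order argument of case $(1)$ no longer applies. The plan is to exploit that $G$ is cyclic with $3$-part exactly $\mb{Z}/3$: a bad odd $\chi$ necessarily has order $2^t\cdot3$ and factors as $\chi=\chi_2\chi_3$ with $\chi_2$ of $2$-power order and $\chi_3$ the unique cubic character, and the vanishing of $\sum_i\chi(a_i)$ forces $\chi_3$ to take the three distinct cube roots of unity on $a_0,a_1,a_2$ while $\chi_2$ is constant on them. Thus the existence of a bad odd character is equivalent to the simultaneous existence of the cubic character separating the $a_i$ and of an odd $2$-power character identifying them. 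The hard point — and the step I expect to absorb most of the work — is to show that these two requirements are incompatible with the additive relation $a_0+a_1+a_2\equiv0\pmod l$: the multiplicative conditions constrain the images of the $a_i$ in $G$, and one must feed in the additive constraint (again through the Jacobi-sum/Stickelberger description of $C(A)$) to rule out the configuration, thereby establishing that no odd bad character exists and completing case $(3)$.
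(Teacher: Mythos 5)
Your overall strategy for the equivalence and for cases (1) and (2) is essentially the paper's: reduce to the matrix/character criterion (Theorem \ref{t3}), compute the slopes of the conjugates of $j(\alpha)$ via Stickelberger, diagonalize over the characters $\chi$ of $(\mb{Z}/l)^{\times}$ trivial on $H_{\alpha}$, and read off that the odd-$\chi$ eigenvalue is $\sum_i\chi(a_i)^{-1}$ times a scalar. One caveat: that scalar is $\frac{1}{ld}\sum_{c}\langle c\rangle\chi(c)^{-1}$, and its non-vanishing is \emph{not} ``explicit'' --- it is equivalent to $L(1,\chi)\neq 0$ for the odd character $\chi$ (the paper cites Washington, Corollary 4.4 and Theorem 4.9). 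Without this genuinely non-trivial input the asserted equivalence does not follow in either direction, so you should name it rather than absorb it into ``an explicitly non-zero scalar.'' Cases (1) and (2) as you present them (forcing $6\mid\mathrm{ord}(\chi)$; the identity $\sum_i\chi(a_i)=\chi(a)(2-\chi(2))$) agree with the paper.

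Case (3) is a genuine gap. You reduce it to showing that no odd $\chi$ of order $2^t\cdot 3$ trivial on $H_{\alpha}$ can have $\chi_3$ separating the $a_i$ while $\chi_2$ identifies them, and you explicitly defer ``the hard point'' of ruling this configuration out using $a_0+a_1+a_2\equiv 0\pmod l$. This is not a routine verification: the additive relation alone does not forbid the configuration, and your sketch never invokes the arithmetic input that actually makes case (3) work, namely that the residual degree $f$ of $p$ in $\mb{Q}(\mu_l)$ is odd (the case $f$ even having been disposed of by Shioda--Katsura). The paper does not argue through the character sum at all here: it applies the second sufficient condition of Theorem \ref{t3}, i.e.\ it shows that the order $n$ of $\en^0_{\mb{F}}(A)$ in ${\rm Br}(C(A))$ is odd, because every slope $s_{j(\alpha)}(v)$ lies in $\frac{1}{f}\mb{Z}$ and hence $n\mid f$ with $f$ odd. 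Your proposed purely combinatorial route is unproven and, as sketched, cannot succeed because it uses no hypothesis beyond those already present in cases where bad characters \emph{do} exist for other degrees; you would either need to import the slope/Brauer-group argument or find a substitute for it.
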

\begin{cor}[Corollary \ref{cc2}]\label{cc1}
Let $l$ be an odd prime number different from $p$. Let $J(C_l)$ be the Jacobian variety of the hyperelliptic curve 
$$C_l\;:\;y^2=x^l-1.$$
Then all $\ell$-adic Tate classes on all powers of $J(C_l)$ are Lefschetz.
\end{cor}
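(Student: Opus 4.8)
The plan is to realize $J(C_l)$ as a power, up to isogeny, of a single simple factor of $J_l$ whose associated triple $\alpha$ has the shape $(a,a,b)$, and then to quote case $(2)$ of Theorem \ref{t}. First I would rewrite the defining equation as $x^l=y^2+1$ and view $C_l$ as the cyclic cover of the $y$-line $\mb{P}^1$ of degree $l$, with Galois action $x\mapsto \zeta_l x$ for a primitive $l$-th root of unity $\zeta_l$. This cover is branched precisely over the three points $y=\pm\sqrt{-1}$ and $y=\infty$, so after a M\"obius transformation sending these to $\{0,1,\infty\}$ it becomes a quotient $F_l/H$ of the Fermat curve of degree $l$ by a subgroup $H\cong\mb{Z}/l$ of its group of coordinate scalings. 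Consequently $J(C_l)$ is isogenous to the sub-abelian variety of $J_l$ cut out by the characters of $(\mb{Z}/l)^2$ trivial on $H$.

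Next I would read off the branch data. The function $y^2+1=(y-\sqrt{-1})(y+\sqrt{-1})$ has a simple zero at each of $\pm\sqrt{-1}$ and a pole of order $2$ at $\infty$, so the local monodromy exponents of the cover are $1,1,-2$; hence the relevant triple is $\alpha=(1,1,-2)\in\ml{A}_l^1$ (indeed $1+1-2\equiv0$, and each entry is $\not\equiv0$ since $l$ is odd). The Frobenius eigenvalues of $J(C_l)$ are then the Jacobi sums $j(t\cdot(1,1,-2))$ for $t\in(\mb{Z}/l)^{\times}$, and since $\dim J(C_l)=(l-1)/2=\frac{1}{2}[\mb{Q}(\mu_l):\mb{Q}]$ these account for the whole Jacobian. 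Because these Jacobi sums form a single orbit under $(\mb{Z}/l)^{\times}={\rm Gal}(\mb{Q}(\mu_l)/\mb{Q})$, the Honda--Tate classification shows that over $\mb{F}$ the variety $J(C_l)$ is isogenous to $A^r$ for one simple factor $A$ of $J_l$, and every triple representing $A$ lies in the orbit of $(1,1,-2)$, i.e.\ has the form $(a,a,-2a)$.

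Since $A$ is thus represented by a triple of the form $(a,a,b)$ up to permutation, case $(2)$ of Theorem \ref{t} applies and shows that all $\ell$-adic Tate classes on all powers of $A$ are Lefschetz; as every power of $J(C_l)$ is isogenous to a power of $A$, the corollary follows. The step I expect to be the main obstacle is the precise identification in the second paragraph: matching the Frobenius eigenvalues of the cyclic cover $C_l$ with the Fermat Jacobi sums $j(\alpha)$, and confirming that $J(C_l)$ is isotypic over $\mb{F}$ (isogenous to a single $A^r$) so that the conclusion for powers of $A$ transfers to all powers of $J(C_l)$. This rests on the classical description (Weil) of the $L$-factors of Fermat quotients through Jacobi sum Hecke characters together with Honda--Tate; once the triple $(1,1,-2)$ is pinned down, the remaining input is exactly Theorem \ref{t}.
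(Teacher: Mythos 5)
Your proposal is correct and follows essentially the same route as the paper: realize $C_l$ as a quotient of the Fermat curve so that the simple factor $A$ of $J(C_l)$ corresponds to a triple of the form $(a,a,b)$ (your explicit $(1,1,-2)$ and its $(\mb{Z}/l)^{\times}$-orbit), then invoke Theorem \ref{t}\,(2). The only difference is that you work out the branch data and the isotypic decomposition via Honda--Tate by hand, whereas the paper delegates exactly these points to Shioda \cite[\S5]{Sh} and Gonz\'alez \cite[Theorem 3.3]{G}.
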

This corollary is an analogue of Shioda's result on Hodge conjecture for $J(C_l)/\mb{C}$ (\cite[Corollary 5.3]{Sh}). In case that $p\equiv1\mod l$,  Shioda's argument in \cite{Sh} works over finite fields and shows that the above corollary holds. The argument also shows that not all $\ell$-adic Tate classes are Lefschetz and the Tate conjecture holds for $J(C_9)$. However in case that $p\not \equiv 1 \mod l$, the argument needs a similar result to the key lemma in Shioda's argument which is not proven (cf. \cite[p.\ 181, Lemma]{Sh2}). In this paper,  we use other argument. More precisely we use Milne's result \cite{Mi3} on the Tate conjecture (see \S2). 

The key of the proof of Theorem \ref{t} is to give a necessary and sufficient condition for which all $\ell$-adic Tate classes are Lefschetz for simple abelian varieties in terms of a matrix as follows: for a simple abelian variety $A$ over $\mb{F}$, we define a matrix $T_A$ whose rank depends only on $A$ (see \S 3). Using Milne's result, we prove the following:

\begin{thm}[Theorem \ref{t2}]\label{t2i}
Let $A$ be a simple abelian variety over $\mb{F}$ of dimension $\geq2$. Then all $\ell$-adic Tate classes on all powers of $A$ are Lefschetz  if and only if ${\rm rank}\ T_A=r$. Here $r$ is the half of the number of all distinct embeddings $C(A)\rightarrow \mb{C}$.
\end{thm}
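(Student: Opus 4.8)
The plan is to recast the statement as an equality of two algebraic subgroups of $GL(V_{\ell}(A))$ and then to detect that equality numerically through the rank of $T_A$. Write $V=V_{\ell}(A)$ for the rational Tate module, let $L(A)\subset GSp(V)$ be the Lefschetz group, i.e.\ the $\mb{Q}$-algebraic group (the neutral component of the centraliser of $\en_{\mb{F}}(A)\otimes\mb{Q}$ in the group of symplectic similitudes) whose invariants in every tensor construction built from $V$ and $V^{\vee}$ are exactly the Lefschetz classes, and let $M(A)\subset GL(V)_{\mb{Q}_{\ell}}$ be the Frobenius torus, the Zariski closure of the subgroup generated by the Frobenius endomorphism $\pi$. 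Since every divisor class is a Tate class we have $M(A)\subset L(A)_{\mb{Q}_{\ell}}$, and, by the very definitions of the two groups, the Lefschetz classes on $A^{n}$ are the $L(A)$-invariants while the Tate classes on $A^{n}$ are the $M(A)$-invariants; thus ``all Tate classes on all powers of $A$ are Lefschetz'' means that $M(A)$ and $L(A)_{\mb{Q}_{\ell}}$ have the same invariants in every tensor construction, which for two reductive groups sharing the faithful representation $V$ is equivalent to the equality $M(A)=L(A)_{\mb{Q}_{\ell}}$. This reduction is precisely what Milne's result recalled in \S2 provides: it pins down the Tate classes over $\mb{F}$ as the $M(A)$-invariants and controls the relative position of $M(A)$ inside $L(A)$, so that the equality of the two groups can be checked after passing to neutral components, and hence on dimensions.

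Granting this reduction, the proof splits into two computations. For the Lefschetz group, $C(A)$ is a CM field with exactly $2r$ complex embeddings; analysing the centraliser of $\en_{\mb{F}}(A)\otimes\mb{Q}$ shows that $L(A)$ is a torus of dimension $r+1$, the extra $1$ being the similitude factor forced by the relation $\pi\bar{\pi}=q$. For the Frobenius torus, $M(A)$ is a subtorus of $\mathrm{Res}_{C(A)/\mb{Q}}\mb{G}_m$, and its character group is $X^{*}(\mathrm{Res}_{C(A)/\mb{Q}}\mb{G}_m)\cong\mb{Z}^{2r}$ modulo the lattice of multiplicative relations $\prod_{\sigma}\sigma(\pi)^{n_{\sigma}}\in\mu_{\infty}$ satisfied by the conjugates of the Weil number $\pi$; hence $\dim M(A)=2r-\rho$, where $\rho$ is the rank of that relation lattice. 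The matrix $T_A$ of \S3 is assembled from exactly these relations, normalised so that $\dim M(A)=\mathrm{rank}\,T_A+1$. Comparing, $M(A)=L(A)_{\mb{Q}_{\ell}}$ holds precisely when $\dim M(A)=\dim L(A)$, i.e.\ when $\mathrm{rank}\,T_A+1=r+1$, which is the stated criterion $\mathrm{rank}\,T_A=r$.

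The main obstacle is the quantitative matching in this last step, together with the structural input that $L(A)$ is a torus. First, if $\en_{\mb{F}}(A)\otimes\mb{Q}$ is strictly larger than its centre $C(A)$, then $L(A)$ is a non-commutative reductive group which cannot equal the torus $M(A)$, and $M(A)=L(A)$ fails; one must verify that the construction of $T_A$ reflects this, so that $\mathrm{rank}\,T_A=r$ already forces $L(A)$ to be a torus. Second, reducing $M(A)=L(A)_{\mb{Q}_{\ell}}$ to the equality of dimensions requires handling the component groups and replacing $\pi$ by a suitable power, so that the ``all powers'' hypothesis becomes a statement about neutral components; Milne's result in \S2 is what licenses this passage. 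Third, the combinatorial heart is to identify the relation lattice of the $\sigma(\pi)$ with the rows of $T_A$ and to check that the weight relation $\pi\bar{\pi}=q$ is counted exactly once, so that the normalisation gives $\dim M(A)=\mathrm{rank}\,T_A+1$ and the target rank comes out to be $r$ rather than $r\pm 1$. The computation $\dim L(A)=r+1$ itself is routine once the CM structure of $C(A)$ is in hand.
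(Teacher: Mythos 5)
Your strategy is in outline the same as the paper's: use Milne's theorem to translate ``all Tate classes on all powers of $A$ are Lefschetz'' into the equality of the Frobenius group (your $M(A)$, the paper's $P(A)$ --- note the paper reserves the symbol $M(A)$ for a different, motivic group) with the Lefschetz group inside $(\mb{G}_m)_{C(A)/\mb{Q}}$, and then detect that equality on character groups. Your shortcut of comparing dimensions rather than kernels is legitimate here: $\chi(L(A))=\mb{Z}^{\Sigma_{C(A)}}/\{g=\iota g,\ \sum g(\sigma)=0\}$ is torsion-free because that sublattice is saturated, so $L(A)$ is a connected torus of dimension $r+1$ and a closed subgroup of the same dimension must equal it. The paper reaches the same point by showing directly that $\mathrm{Ker}(J)=\mathrm{Ker}(T')$ if and only if $\mathrm{rank}\,T'=r+1$.

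There is, however, a genuine gap at precisely the step you label ``the combinatorial heart'' and then leave undone: the identity $\dim P(A)=\mathrm{rank}\,T_A+1$. This is not a normalisation convention one may impose; it is a fact to be proved about the specific matrix $T_A=(s_{\sigma_j\pi_0}(\mf{p}_i)-\tfrac12)$, and it is the content of the paper's Lemma 3.4(1), $\mathrm{rank}\,T'=\mathrm{rank}\,T_A+1$, where $T'=(U\ V)$ is the full $d\times 2r$ slope matrix computing $\dim P(A)$. Its proof needs two separate inputs: the weight relation $s_{\sigma_j\pi_0}(\mf{p}_i)+s_{\iota\sigma_j\pi_0}(\mf{p}_i)=1$, which collapses the $2r$ columns to $r+1$; and the action of complex conjugation on the primes $\mf{p}_1,\dots,\mf{p}_d$ above $p$ (split pairs $\iota\mf{p}_i=\mf{p}_{i+t}$ versus fixed primes, where $s=\tfrac12$), which is what guarantees that subtracting $\tfrac12$ from every entry lowers the rank by exactly one rather than by zero or two. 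Without this computation the criterion could just as well come out as $\mathrm{rank}\,T_A=r\pm1$, so the theorem is not yet established. A secondary point: your parenthetical definition of $L(A)$ as the centraliser of $\en^0_{\mb{F}}(A)$ in the symplectic similitude group is not the group whose invariants are the Lefschetz classes in Milne's theorem; that group is defined through the centre $C(A)$ alone, exactly as in the paper's Definition, and is always of multiplicative type. Consequently your proposed verification that ``$\mathrm{rank}\,T_A=r$ forces $L(A)$ to be a torus'' is both unavailable ($T_A$ carries no information about the non-commutativity of $\en^0_{\mb{F}}(A)$) and unnecessary.
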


This paper is organized as follows: in \S2, we recall Milne's result on the Tate conjecture. In \S3, we prove Theorem \ref{t2i} and we calculate the matrix $T_A$ in a special case (Theorem \ref{t3}). In the last section, we prove Theorem \ref{t} using the necessary and sufficient condition.

\subsection*{Notation}

Through this paper, $\mb{Q}^{\rm al}$ denotes the algebraic closure of $\mb{Q}$ in $\mb{C}$. For a finite \'etale $\mb{Q}$-algebra $E$, $\Sigma_E:={\rm Hom}(E,\mb{Q}^{\rm al})$. If $E$ is a field Galois over $\mb{Q}$, we identify $\Sigma_E$ with the Galois group ${\rm Gal}(E/\mb{Q})$.

For a finite set $S$, $\mb{Z}^S$ denotes the set of functions $f:S\rightarrow \mb{Z}$. 

An affine algebraic group is of multiplicative type if it is  commutative and its identity component is a torus. For such a group $W$ over $\mb{Q}$, $\chi(W):={\rm Hom}(W_{\mb{Q}^{\rm al}},\mb{G}_m)$ denotes the group of characters of $W$.

For a finite \'etale $\mb{Q}$-algebra $E$, $(\mb{G}_m)_{E/\mb{Q}}$ denotes the Weil restriction ${\rm Res}_{E/\mb{Q}}(\mb{G}_m)$ which is characterized by $\chi((\mb{G}_m)_{E/\mb{Q}})=\mb{Z}^{\Sigma_E}$.

For an abelian variety $A$ over $\mb{F}$, $\en^0_{\mb{F}}(A)$ denotes $\en_{\mb{F}}(A)\otimes \mb{Q}$, and $C(A)$ denotes the center of $\en^0_{\mb{F}}(A)$.

\section{Milne's result on the Tate conjecture}
We recall Milne's result on the Tate conjecture. For an abelian variety $A$ over $\mb{F}$, there are three important groups of multiplicative type $L(A)$, $M(A)$ and $P(A)$ which act on $\displaystyle H^{2*}(A):=\bigoplus_{i\ge0}H^{2i}(A,\mb{Q}_{\ell}(i))$. These groups are introduced by Milne \cite{Mi1}, \cite{Mi2}, \cite{Mi3}, who proved that these groups are characterized by the following properties (see \cite[p.\ 14, Lemma]{Mi3}): for all $r$,
\begin{itemize}
\item[(a)] $H^{2*}(A^r)^{L(A)}$ is the space of Lefschetz classes.

\item[(b)] $H^{2*}(A^r)^{M(A)}$ is the space of algebraic classes, provided numerical equivalence coincides with $\ell$-adic homological equivalence.

\item[(c)] $H^{2*}(A^r)^{P(A)}$ is the space of $\ell$-adic Tate classes on $A^r$.
\end{itemize}
Statement (a) is proved by Milne \cite{Mi1}. Statement (b) is proved by using a result of Jannsen \cite{Ja}. Clozel \cite{Co} proved that for an abelian variety $A$ over $\mb{F}$, there is a set of primes $\ell$ of positive density for which $\ell$-adic homological equivalence and numerical equivalence coincide. Deligne \cite{D} gives a more precise version of Clozel's result. Statement (c) is almost by definition of $P(A)$. The following theorem is due to Milne \cite[p.\ 14, Theorem]{Mi3}:
\begin{thm}\label{t1}
Let $A$ be an abelian variety over $\mb{F}$. Then $P(A)\subset M(A)\subset L(A)$, and

{\rm (i)} the Tate conjecture holds for $A$ if and only if $P(A)=M(A)${\rm ;}

{\rm (ii)} all $\ell$-adic Tate classes on all powers of $A$ are Lefschetz if and only if $P(A)=L(A)$. 
\end{thm}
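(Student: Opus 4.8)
The plan is to deduce the theorem from the three characterizations (a)--(c) together with the elementary inclusions of cycle classes and one reconstruction principle for groups of multiplicative type. All three groups act on $H^*(A)=\bigoplus_i H^i(A,\mb{Q}_{\ell})$, which by K\"unneth is the exterior algebra on $V:=H^1(A)$, so that $H^*(A^r)\cong H^*(A)^{\otimes r}$ equivariantly; the Tate twists are recorded by the Tate object $\mb{Q}_{\ell}(-1)$, and the three groups act faithfully on the representation assembled from $V$ and $\mb{Q}_{\ell}(-1)$. The classes of interest live in the even part $H^{2*}$, and (a), (b), (c) identify them with the invariant subspaces of $L(A)$, $M(A)$, $P(A)$ on each $H^{2*}(A^r)$.

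First I would establish the inclusions from the standard inclusions of class groups: a Lefschetz class is a polynomial in divisor classes, hence algebraic, and every algebraic class lies in $\ml{T}_{\ell}^*(A)$ because ${\rm Im}(cl_A^i)\subset \ml{T}_{\ell}^i(A)$. Thus for every $r$,
\[
H^{2*}(A^r)^{L(A)}\subseteq H^{2*}(A^r)^{M(A)}\subseteq H^{2*}(A^r)^{P(A)}.
\]
Passing to invariants reverses inclusions, so the task is to see that equality of (resp.\ containment among) these invariant subspaces \emph{for all} $r$ forces equality of (resp.\ containment among) the groups. This is the reconstruction principle: for a group $G$ of multiplicative type with a faithful representation $V$, every representation embeds into a tensor construction $V^{\otimes a}\otimes(V^\vee)^{\otimes b}$ (because the characters occurring in $V$ generate $\chi(G)$), and a closed subgroup $H$ is recovered as $\{g\in G:\ g|_{T^{H}}={\rm id}\text{ for all such }T\}$; hence $H_1\subseteq H_2\iff T^{H_2}\subseteq T^{H_1}$ for all $T$, and likewise for equality. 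Granting that the family $\{H^{2*}(A^r)\}_r$ realizes enough of these constructions, the display yields $P(A)\subseteq M(A)\subseteq L(A)$.

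With the principle in hand, (i) and (ii) are formal. The Tate conjecture for all powers of $A$ asserts that algebraic classes equal Tate classes on every $A^r$, i.e.\ $H^{2*}(A^r)^{M(A)}=H^{2*}(A^r)^{P(A)}$ for all $r$, equivalently $P(A)=M(A)$; here (b) enters, so one works with an $\ell$ for which $\ell$-adic homological equivalence coincides with numerical equivalence, available by the results of Clozel and Deligne. Similarly, all $\ell$-adic Tate classes on all powers being Lefschetz means $H^{2*}(A^r)^{L(A)}=H^{2*}(A^r)^{P(A)}$ for all $r$, equivalently $P(A)=L(A)$; this direction uses only the unconditional statements (a) and (c).

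The main obstacle is the reconstruction step, precisely the point that the \emph{even} cohomology already separates the three groups: a priori the principle invokes all $V^{\otimes a}\otimes(V^\vee)^{\otimes b}$, including those of odd total degree, which correspond to odd cohomology, whereas the classes live only in $H^{2*}$. The resolution, which I would make precise, is that each of $P(A)$, $M(A)$, $L(A)$ contains the central weight homothety; in particular the element acting as $-1$ on $H^1(A)$ and trivially on $\mb{Q}_{\ell}(-1)$ lies in all three groups and acts on a tensor construction by $(-1)$ raised to the number of $H^1$-factors. Every construction with an odd number of such factors therefore has zero invariants under all three groups and is invisible to the invariants functor, while $H^{2i}(A^r)$ uses an even total $H^1$-degree. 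Hence the even constructions occurring in $\{H^{2*}(A^r)\}_r$ form a family sufficient to detect and distinguish $P(A)$, $M(A)$, $L(A)$, and the exactness of invariants for groups of multiplicative type then makes the inclusions of the first part and the equivalences (i) and (ii) go through.
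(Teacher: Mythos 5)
The paper does not prove this statement: it is quoted verbatim from Milne \cite[p.~14, Theorem]{Mi3}, and the characterizations (a)--(c) that you take as your starting point are likewise quoted from Milne's Lemma on the same page. So the comparison is really against Milne's argument, and your architecture --- the inclusions (Lefschetz classes) $\subseteq$ (algebraic classes) $\subseteq$ (Tate classes) on every $A^r$, followed by reconstruction of a group of multiplicative type from the kernels of the characters occurring in its invariants --- is the correct one and is essentially Milne's. Your observation that only even constructions are visible, and that this costs nothing because odd constructions have no invariants under any of the three groups, is also sound (one can check directly that $-1$ lies in $P(A)$: any character $g$ of $L(A)$ killing a power of $\pi_0$ must have $\sum_\sigma g(\sigma)=0$ by the absolute-value condition on Weil numbers, hence sends $-1$ to $(-1)^{\sum g(\sigma)}=1$). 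However, parity alone is not quite what the reconstruction needs. What must be shown is that every character of $L(A)$ that could possibly be trivial on $P(A)$ or $M(A)$ --- necessarily a character of weight $0$ --- actually occurs in some $H^{2i}(A^r)(i)$; this uses the relation $-\sigma=\iota\sigma-(\sigma+\iota\sigma)$ in $\chi(L(A))$ coming from the polarization to rewrite an arbitrary weight-$0$ element of $\mb{Z}^{\Sigma_{C(A)}}$ as an effective sum of $2i$ embeddings minus $i$ times the weight character. Your $-1$ element only detects the parity of the weight, so the sentence ``granting that the family realizes enough of these constructions'' is exactly where the remaining work sits; it is repairable, but as written it is an assumption, not a proof.

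There is one genuine gap, in part (i). Your formal argument proves: the Tate conjecture holds for \emph{all powers} of $A$ if and only if $P(A)=M(A)$. The theorem asserts the equivalence with the Tate conjecture for $A$ \emph{alone}. The forward implication ($P(A)=M(A)$ gives Tate for $A$) is fine, but the converse requires the non-formal fact that for an abelian variety over $\mb{F}$ the Tate conjecture for $A$ implies it for every power $A^r$; the invariants in $H^{2*}(A)$ for $r=1$ do not suffice to reconstruct $M(A)$, and indeed exotic Tate classes typically first appear on powers. Milne supplies this implication separately; your proposal does not, and nothing in the reconstruction principle yields it. A smaller caveat of the same kind: since (b) is conditional on numerical equivalence agreeing with $\ell$-adic homological equivalence, your argument as written establishes (i) only for the primes $\ell$ covered by Clozel--Deligne (the groups themselves being $\ell$-independent, the inclusions $P(A)\subset M(A)\subset L(A)$ are unaffected). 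Part (ii), which is the only part this paper actually uses, relies solely on the unconditional statements (a) and (c) and is complete once the reconstruction step is made precise.
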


We now recall the definitions of the Lefschetz group and the group $P$ associated to an abelian variety over $\mb{F}$, and recall a description of the character group of these groups. For properties and results on the groups, we refer to Milne \cite{Mi1}, \cite{Mi2} and \cite{Mi3}. 

Let $A$ be an abelian variety over $\mb{F}$. A polarization $\lambda: A\rightarrow A^{\vee}$ of $A$ determines an involution  of $\en^0_{\mb{F}}(A)$ which stabilizes $C(A)$. The restriction of the involution to $C(A)$ is independent of the choice of $\lambda$. By $\dagger$, we denote this restriction on $C(A)$. 

\begin{df}[\text{\cite[4.3, 4.4]{Mi1}, \cite[pp.52-53]{Mi2}, \cite[A.3]{Mi3}}]
The \textit{Lefschetz group} $L(A)$ of $A$ is the algebraic group over $\mb{Q}$ such that
\begin{align*}
L(A)(R)=\{\alpha \in (C(A)\otimes R)^{\times} \ | \ \alpha \alpha^{\dagger} \in R^{\times}\}
\end{align*}
 for all $\mb{Q}$-algebras $R$.
\end{df}
We can describe $L(A)$ as a subgroup of $(\mb{G}_m)_{C(A)/\mb{Q}}$ in terms of characters as follows (\cite[A.7]{Mi3}): $L(A)$ is a subgroup of $(\mb{G}_m)_{C(A)/\mb{Q}}$ whose character group is 
\begin{align}\label{cl}
\frac{\mb{Z}^{\Sigma_{C(A)}}}{\{g \in \mb{Z}^{\Sigma_{C(A)}}\ \ | \ g=\iota g \ \text{and} \ \sum g(\sigma)=0\}}.
\end{align}
Here $\iota g$ is a function sending an element $\sigma$ of $\Sigma_{C(A)}$ to $g(\iota \sigma)$, and $\sum g(\sigma)$ denotes $\displaystyle \sum _{\sigma \in \Sigma_{C(A)}}g(\sigma)$.

\begin{df}[\text{\cite[\S 4]{Mi2}, \cite[A.7]{Mi3}}]
Let $A_0$ be a model of $A$ over a finite field $\mb{F}_q \subset \mb{F}$, and let $\pi_0$ be the Frobenius of $A_0$. Then the group $P(A)$ of $A$ is the smallest algebraic subgroup of $L(A)$ containing some power of $\pi_0$. It is independent of the choice of $A_0$.
\end{df}
To state Milne's result on the character group of $P$, we introduce Weil numbers and some notion which is related to Weil numbers. A Weil $q$-number of weight $i$ is an algebraic number $\pi$ such that $q^N\pi$ is an algebraic integer for some $N$ and the complex absolute value $|\sigma(\pi)|$ is $q^{i/2}$, for all embeddings $\sigma :\mb{Q}[\pi]\rightarrow\mb{C}$. Then $\pi$ is a unit at all primes of $\mb{Q}[\pi]$ not dividing $p$. We define the {\it slope function} $s_{\pi}$ of $\pi$ as follows: for any prime $\mf{p}$ dividing $p$ of a field containing $\pi$,
\begin{align}\label{sf}
s_{\pi}(\mf{p})=\frac{{\rm ord}_{\mf{p}}(\pi)}{{\rm ord}_{\mf{p}}(q)}.
\end{align}
For the definition of Weil numbers, $s_{\pi}(\mf{p})+s_{\pi}(\iota \mf{p})=i (=wt(\pi))$. Here $\iota$ is complex conjugation on $\mb{C}$. The slope function determines a Weil $q$-number up to a root of unity. 

Let $\pi$ be a Weil $p^f$-number and let $\pi^{\prime}$ be a Weil $p^{f^{\prime}}$-number. We say $\pi$ and $\pi^{\prime}$ are {\it equivalent} if  $\pi^{f^{\prime}}={\pi^{\prime}}^f\cdot \zeta$ for some root of unity $\zeta$. We define a {\it Weil germ} to be an equivalent class of Weil numbers. For a Weil germ $\pi$, the slope function and weight of $\pi$ are the slope function (see \eqref{sf}) and weight of any representative of $\pi$, and $\mb{Q}[\pi]$ is defined to be the smallest subfield of $\mb{Q}^{\rm al}$ containing a representative of $\pi$. 

Now assume that $A$ is simple and that $\en^0_{\mb{F}}(A)=\en^0_{\mb{F}_q}(A_0)$. Then the Frobenius endomorphism $\pi_0$ of $A_0$ generates $C(A)$ over $\mb{Q}$, i.e. $C(A)=\mb{Q}[\pi_0]$. We know that the Frobenius endomorphism $\pi_0$ of $A_0$ is a Weil $q$-number of weight 1. Let $\pi_A$ denote the germ represented by $\pi_0$. Milne's result on the character of $P(A)$ is the following (\cite[A.7]{Mi3}): let $g$ be a character of $L(A)$. Then $g$ is trivial on $P(A)$ if and only if for all primes $v$ dividing $p$ of a field containing all conjugates $\sigma (\pi_0)$,
\begin{align}\label{cp}
\sum_{\ \ \sigma \in \Sigma_{C(A)}} g(\sigma) s_{\sigma \pi_A}(v)=0.
\end{align}
Note that $s_{\sigma \pi_A}(v)=s_{\pi_A}(\sigma ^{-1}v)$.

\section{Necessary and sufficient condition}
Let $A$ be a simple abelian variety over $\mb{F}$. Let $A_0$ be a model of $A$ over a finite subfield $\mb{F}_q\subset \mb{F}$ with property that $\en_{\mb{F}_q}(A_0)=\en_{\mb{F}}(A)$. We take a finite Galois extension $K$ of $\mb{Q}$ containing all conjugates of $C(A)$. Let $\{\mf{p}_1,\dots,\mf{p}_d\}$ be the set of primes of $K$ dividing $p$. We assume that $\dim A\geq2$. Then $C(A)$ is totally imaginary and CM field (cf. \cite[p.\,97]{TaH}). Let $\{\sigma_1,\dots,\sigma_{r},\iota\sigma_1,\dots,\iota\sigma_{r}\}$ be the set of all distinct embeddings $C(A) \rightarrow \mb{C}$. We define a $d\times r$ matrix $T_A$ as follows:
\begin{align*}
T_A=(a_{ij}),\;\;a_{ij}:=
s_{\sigma_j\pi_0}(\mf{p}_i)-\frac{1}{2} \ \ &\text{($1\le i\le d$, $1\leq j\leq r$)}.\end{align*}
Then the matrix $T_A$ is independent of the choice of the model $A_0/\mb{F}_q$, but depends on the ordering of the $\mf{p}_i$ and the $\sigma_j$.  The rank of $T_A$ depends only on $A$. Using Milne's result on the character group of $L(A)$ and $P(A)$, we prove the following: 
\begin{thm}\label{t2}
Let $A$ be a simple abelian variety over $\mb{F}$ of dimension $\geq2$. Then $P(A)=L(A)$ if and only if ${\rm rank}\ T_A=r$.
\end{thm}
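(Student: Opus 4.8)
The plan is to pass to character groups. Since $P(A)\subset L(A)$ are algebraic groups of multiplicative type, the inclusion induces a surjection $\chi(L(A))\twoheadrightarrow\chi(P(A))$, and by the anti-equivalence between diagonalizable groups and their character lattices, $P(A)=L(A)$ holds if and only if this surjection is injective, that is, if and only if the only character of $L(A)$ that is trivial on $P(A)$ is the trivial one. So I would recast the statement as: the subgroup of $g\in\chi(L(A))$ satisfying Milne's criterion \eqref{cp} is trivial precisely when ${\rm rank}\,T_A=r$. Throughout I would work with the explicit description \eqref{cl} of $\chi(L(A))=\mb{Z}^{\Sigma_{C(A)}}/N$, where $N=\{g\mid g=\iota g,\ \sum g=0\}$, lifting a character $g$ and writing $g_j:=g(\sigma_j)$ and $g_j':=g(\iota\sigma_j)$ for $1\le j\le r$.

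The next step is to translate \eqref{cp} into the matrix $T_A$. Two ingredients enter. First, since $\pi_A$ has weight $1$, one has $\iota\sigma_j\pi_A=\overline{\sigma_j\pi_A}=q/\sigma_j\pi_A$, so that $s_{\iota\sigma_j\pi_A}(\mf{p}_i)=1-s_{\sigma_j\pi_A}(\mf{p}_i)$; second, $s_{\sigma_j\pi_A}(\mf{p}_i)=a_{ij}+\tfrac12$ by the very definition of $T_A$. Substituting both into \eqref{cp} and collecting terms, the condition at each prime $\mf{p}_i$ becomes $\sum_{j=1}^r(g_j-g_j')\,a_{ij}+\tfrac12\sum_{\sigma}g(\sigma)=0$. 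Writing $h:=(g_j-g_j')_{j}\in\mb{Z}^r$ and $e:=\sum_{\sigma}g(\sigma)$, this is the single vector equation $T_A h=-\tfrac12\,e\,\mathbf{1}$, where $\mathbf{1}\in\mb{Q}^d$ is the all-ones vector. I would also check that \eqref{cp} vanishes on $N$ (elements of $N$ have $h=0$ and $e=0$), so the equation is well defined on classes in $\chi(L(A))$.

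The decisive point is a normalization forced by weight $1$. Because each $\sigma_j\pi_A$ is a weight-$1$ $q$-Weil number and complex conjugation permutes the primes $\mf{p}_1,\dots,\mf{p}_d$ above $p$, summing the relation $s_{\sigma_j\pi_A}(\mf{p})+s_{\sigma_j\pi_A}(\iota\mf{p})=1$ over all $\mf{p}_i$ yields $\sum_i s_{\sigma_j\pi_A}(\mf{p}_i)=d/2$, i.e. $\sum_{i=1}^d a_{ij}=0$ for every $j$. Hence $\mathbf{1}^{\mathsf{T}}T_A=0$. Left-multiplying $T_A h=-\tfrac12\,e\,\mathbf{1}$ by $\mathbf{1}^{\mathsf{T}}$ and using $\mathbf{1}^{\mathsf{T}}\mathbf{1}=d>0$ forces $e=0$, and the trivial-on-$P$ condition collapses to $T_A h=0$. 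Thus the characters of $L(A)$ trivial on $P(A)$ correspond exactly to the integer null vectors $h$ of $T_A$. If ${\rm rank}\,T_A=r$ then $h=0$, the kernel is trivial, and $P(A)=L(A)$; if ${\rm rank}\,T_A<r$, a nonzero rational null vector exists and, after scaling (and doubling if needed to meet the parity $e\equiv\sum_j h_j\bmod 2$ built into \eqref{cl}), yields a genuine nonzero element of $\chi(L(A))$ trivial on $P(A)$, so $P(A)\neq L(A)$.

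I expect the main obstacle to be the bookkeeping in the translation step: correctly absorbing the $-\tfrac12$ shift in the definition of $a_{ij}$ together with the total-degree term $\tfrac12\sum_{\sigma}g(\sigma)$, and confirming that the resulting condition descends to $\chi(L(A))$. The normalization $\sum_i a_{ij}=0$ is precisely what makes the argument work, since it kills the $e$-term and identifies the trivial-on-$P$ locus with $\ker T_A$; without it the contribution of the total degree could not be separated from that of $h$. The only remaining subtlety is the integrality and parity check, ensuring a rational null vector of $T_A$ genuinely produces a nonzero class in the character lattice.
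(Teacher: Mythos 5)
Your proof is correct and follows essentially the same route as the paper: both start from Milne's descriptions \eqref{cl} and \eqref{cp} of the character groups, exploit the weight-one relations $s_{\sigma_j\pi_A}(\mf{p}_i)+s_{\iota\sigma_j\pi_A}(\mf{p}_i)=1$ together with the action of complex conjugation on the primes above $p$, and reduce the comparison of $P(A)$ with $L(A)$ to linear algebra on the slope matrix. The only difference is organizational: where the paper compares the kernels of the explicit matrices $J$ and $T'=(U\ V)$ by row and column reduction, you change variables to $h=(g_j-g_j')_j$ and $e=\sum_{\sigma}g(\sigma)$ and use the column-sum identity $\mathbf{1}^{\mathsf{T}}T_A=0$ to collapse the trivial-on-$P(A)$ condition to $T_Ah=0$, $e=0$, correctly handling the index-two parity constraint $e\equiv\sum_j h_j \bmod 2$ when producing a nontrivial character in the rank-deficient case.
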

We first describe the kernel of the natural map $\mb{Z}^{\Sigma_{C(A)}}\rightarrow \chi(L(A))$ in terms of a matrix. Let $J$ be the $(r+1) \times 2r$ matrix
\begin{align*}
\begin{pmatrix}
I_r&-I_r\\
0&B
\end{pmatrix},
\ \ \ B=
\begin{pmatrix}
1&\cdots&1
\end{pmatrix}
\end{align*}
where
$I_r$ is the $r\times r$ identity matrix. We consider the set of character functions for $\sigma_i, \iota\sigma_i$ as a basis of $\mb{Z}^{\Sigma_{C(A)}}$. We then have the following:
\begin{lem}\label{l1}
The kernel of the natural map $\phi : \mb{Z}^{\Sigma_{C(A)}}\rightarrow \chi(L(A))$ coincides with the kernel of the map $\mb{Z}^{\Sigma_{C(A)}}\rightarrow\mb{Z}^{r+1}$ defined by $J$.
\end{lem}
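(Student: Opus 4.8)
The plan is to compute both kernels explicitly in the ordered basis of characteristic functions $(e_{\sigma_1},\dots,e_{\sigma_r},e_{\iota\sigma_1},\dots,e_{\iota\sigma_r})$ and to verify that the defining conditions coincide. First I would use the description \eqref{cl}: it identifies $\chi(L(A))$ with the quotient of $\mb{Z}^{\Sigma_{C(A)}}$ by the subgroup $N=\{g \mid g=\iota g \text{ and } \sum_\sigma g(\sigma)=0\}$, and the natural map $\phi$ induced by the inclusion $L(A)\hookrightarrow(\mb{G}_m)_{C(A)/\mb{Q}}$ is exactly this quotient map. Hence $\ker\phi=N$, and the whole assertion reduces to proving $N=\ker J$.

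Next I would unwind the two defining conditions of $N$ in coordinates. Writing $g$ as the column vector $(g(\sigma_1),\dots,g(\sigma_r),g(\iota\sigma_1),\dots,g(\iota\sigma_r))$ and using that $\iota$ is an involution interchanging $\sigma_i$ and $\iota\sigma_i$, the function $\iota g$ sends $\sigma_i\mapsto g(\iota\sigma_i)$ and $\iota\sigma_i\mapsto g(\sigma_i)$. Thus the condition $g=\iota g$ is equivalent to the $r$ scalar equations $g(\sigma_i)=g(\iota\sigma_i)$ for $i=1,\dots,r$, which are precisely the equations read off from the first $r$ rows $(I_r\;{-}I_r)$ of $J$.

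Then I would treat the trace condition. The last row $(0\;B)$ of $J$ yields the single equation $\sum_{i=1}^r g(\iota\sigma_i)=0$, so the point is to reconcile this with $\sum_\sigma g(\sigma)=0$. Granting the equalities $g(\sigma_i)=g(\iota\sigma_i)$ already produced by the first block, one computes $\sum_\sigma g(\sigma)=\sum_i g(\sigma_i)+\sum_i g(\iota\sigma_i)=2\sum_i g(\iota\sigma_i)$; since $\mb{Z}$ is torsion free, $\sum_\sigma g(\sigma)=0$ is then equivalent to $\sum_i g(\iota\sigma_i)=0$. Combining the two observations shows $g\in N$ if and only if $Jg=0$, i.e. $N=\ker J$, as claimed.

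There is no serious obstacle here — the lemma is essentially a bookkeeping identity — but the one step I would be careful about is this last reconciliation. A naive translation of $\sum_\sigma g(\sigma)=0$ would give a row $(B\;B)$ rather than $(0\;B)$, and it is exactly because the symmetry $g=\iota g$ has already been imposed (so that the factor $2$ may be divided out in the torsion-free group $\mb{Z}$) that the simpler last row of $J$ is legitimate. I would also check that the chosen ordering of the basis matches the block form of $J$, since both this lemma and the matrix $T_A$ appearing in Theorem \ref{t2} depend on a consistent ordering of the embeddings $\sigma_j$.
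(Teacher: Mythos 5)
Your proposal is correct and follows essentially the same route as the paper: both identify $\ker\phi$ with $\{g \mid g=\iota g,\ \sum g(\sigma)=0\}$ via \eqref{cl} and then reduce that linear system to the rows of $J$, the paper by explicit integer row operations on the $(2r+1)\times 2r$ matrix $J'$ (arriving at a row $2B$ whose kernel equals that of $B$), you by directly substituting $g(\sigma_i)=g(\iota\sigma_i)$ into the trace condition and dividing out the resulting factor of $2$. The point you flag about torsion-freeness is exactly the step the paper handles implicitly when passing from $2B$ to $B$.
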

\begin{proof}
From \eqref{cl}, the kernel of $\phi$ is $\{g \in \mb{Z}^{\Sigma_{C(A)}} \ |\ g=\iota g \ \text{and} \ \sum g(\sigma)=0\}$. Here $\iota g$ is a function sending an element $\sigma$ of $\Sigma_{C(A)}$ to $g(\iota \sigma)$, and $\sum g(\sigma)$ denotes $\displaystyle \sum _{\sigma \in \Sigma_{C(A)}}g(\sigma)$. Therefore the kernel of $\phi$ is the kernel of the map $\mb{Z}^{\Sigma_{C(A)}}\rightarrow\mb{Z}^{2r+1}$ defined by the $(2r+1) \times 2r$ matrix
\begin{align*}
J^{\prime}:=
\begin{pmatrix}
I_r&-I_r\\
-I_r&I_r\\
B&B
\end{pmatrix}.
\end{align*}
By row operations, $J^{\prime}$ is equivalent over $\mb{Z}$ to  
\begin{align*}
\begin{pmatrix}
I_r&-I_r\\
0&2B\\
0_r&0_r
\end{pmatrix}
\end{align*}
where $0_r$ is the $r\times r$ matrix with all entries equal to $0$. From this we have ${\rm Ker}(J)={\rm Ker}(J^{\prime})$.
\end{proof}

Next we describe the condition \eqref{cp} in terms of a matrix. Let $T^{\prime}$ be the $d \times 2r$ matrix $(U \ V)$, where $U$ is the $d\times r$ matrix $(s_{\sigma_j \pi_0}(\mf{p}_i))$ and $V$ is the $d\times r$ matrix $(s_{\iota \sigma_j \pi_0}(\mf{p}_i))$ ($1\leq i\leq d, 1\leq j\leq r$). From \eqref{cp}, a character $g$ of $L(A)$ is trivial on $P(A)$ if and only if $g$ belongs to the kernel of the map $\mb{Z}^{\Sigma_{C(A)}}\rightarrow\mb{Z}^d$ defined by $T^{\prime}$. By Lemma \ref{l1}, $L(A)=P(A)$ if and only if ${\rm Ker}(J)={\rm Ker}(T^{\prime})$. Hence we deduce Theorem \ref{t2} from the following lemma:
\begin{lem}
Let notation be as above. \\
$(1)$ ${\rm rank} \ T^{\prime}= {\rm rank} \ T_A+1$. \\
$(2)$ ${\rm Ker}(J)={\rm Ker}(T^{\prime})$ if and only if ${\rm rank} \ T^{\prime}= r+1$
\end{lem}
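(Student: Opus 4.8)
The plan is to reduce both statements to two numerical identities for the slopes of the weight-$1$ Weil number $\pi_0$, after which everything follows by elementary column/row operations and dimension counting. First I would record the behaviour of the slope function. Writing as in the text $U=(s_{\sigma_j\pi_0}(\mf{p}_i))$ and $V=(s_{\iota\sigma_j\pi_0}(\mf{p}_i))$, I would use that $\pi_0$ has weight $1$, so that $\sigma_j(\pi_0)\cdot\iota\sigma_j(\pi_0)=\sigma_j(\pi_0\cdot\iota\pi_0)=q$; taking $\mathrm{ord}_{\mf{p}_i}$ and dividing by $\mathrm{ord}_{\mf{p}_i}(q)$ gives
\[
s_{\sigma_j\pi_0}(\mf{p}_i)+s_{\iota\sigma_j\pi_0}(\mf{p}_i)=1,
\]
that is, $U_{ij}+V_{ij}=1$ for all $i,j$. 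Separately, the stated property $s_\pi(\mf{p})+s_\pi(\iota\mf{p})=wt(\pi)$ applied to $\pi=\sigma_j\pi_0$ gives $s_{\sigma_j\pi_0}(\iota\mf{p}_i)=1-s_{\sigma_j\pi_0}(\mf{p}_i)$; since $\iota$ permutes $\{\mf{p}_1,\dots,\mf{p}_d\}$, this says that the row of $T_A$ indexed by $\iota\mf{p}_i$ is the negative of the row indexed by $\mf{p}_i$ — which is precisely what the normalisation by $-\tfrac12$ arranges.

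For part $(1)$ I would compute $\mathrm{rank}\,T'=\mathrm{rank}(U\;V)$ by column operations. Adding the $j$-th column of $U$ to the $j$-th column of $V$ replaces $V$, by the first identity, by a matrix all of whose columns equal the all-ones column $\mathbf{1}$; discarding the resulting duplicate columns gives $\mathrm{rank}\,T'=\mathrm{rank}(U\mid\mathbf{1})$. Because $T_A=U-\tfrac12\mathbf{1}$ and $\mathbf{1}$ is present as a column, $\mathrm{rank}(U\mid\mathbf{1})=\mathrm{rank}(T_A\mid\mathbf{1})$. It then remains to see that $\mathbf{1}$ is not in the column span of $T_A$: if $T_A c=\mathbf{1}$ for some vector $c$, then evaluating the identity on a prime $\mf{p}_i$ and on its conjugate $\iota\mf{p}_i$, and using that these two rows of $T_A$ are negatives of one another, would force $1=-1$. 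Hence $\mathrm{rank}\,T'=\mathrm{rank}\,T_A+1$.

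For part $(2)$ I would first note that $\mathrm{rank}\,J=r+1$ (the $r$ rows of $(I_r\;-I_r)$ are independent, and the last row $(0\;B)$ is not in their span, since a combination of the first $r$ rows has the shape $(c\;-c)$), so $\dim\ker J=r-1$. The key point is that $\ker J\subseteq\ker T'$ unconditionally: a vector $g=(x,y)\in\ker J$ satisfies $x_j=y_j$ and $\sum_j x_j=0$, whence $(T'g)_i=\sum_j(U_{ij}+V_{ij})x_j=\sum_j x_j=0$ by the first identity. Granting this inclusion, if $\ker J=\ker T'$ then $\dim\ker T'=r-1$, so $\mathrm{rank}\,T'=r+1$; conversely, if $\mathrm{rank}\,T'=r+1$ then $\dim\ker T'=r-1=\dim\ker J$, and since both kernels are saturated in $\mb{Z}^{\Sigma_{C(A)}}$ (being kernels of maps into free $\mb{Z}$-modules) the inclusion must be an equality. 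Together with Lemma \ref{l1} and the description of $P(A)$ via \eqref{cp}, this gives Theorem \ref{t2}.

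The main obstacle I anticipate is not the linear algebra but pinning down the two slope identities with the correct roles of the two involutions — complex conjugation acting on the Weil number $\sigma_j\pi_0$ versus its induced action on the primes $\mf{p}_i$ — and verifying their compatibility; this relies on complex conjugation being central in $\mathrm{Gal}(K/\mb{Q})$, which holds because $C(A)$ is a CM field. Once the antisymmetry of the rows of $T_A$ under $\mf{p}_i\mapsto\iota\mf{p}_i$ is in place, both parts of the lemma follow formally.
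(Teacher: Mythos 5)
Your proof is correct, and while part (1) follows essentially the paper's route, part (2) takes a genuinely different and cleaner path in the converse direction. For (1), both you and the paper use $U_{ij}+V_{ij}=1$ to column-reduce $T'$ to $(U\mid\mathbf{1})$ and then exploit the $\iota$-antisymmetry of the rows of $T_A$; the paper does this by choosing orbit representatives for $\iota$ on $\{\mf{p}_1,\dots,\mf{p}_d\}$ and reducing explicitly to the block $U''$, whereas you argue directly that $\mathbf{1}$ cannot lie in the column span of $T_A$ (note that your $1=-1$ argument also covers $\iota$-fixed primes, whose rows are forced to vanish) --- same content, slightly different packaging. For (2), the paper's converse assumes ${\rm rank}\,T'=r+1$, row-reduces $T'$ to $(I_{r+1}\ D)$ plus zero rows (tacitly using that the first $r+1$ columns are pivot columns, which does hold here but only via the column relations from part (1)), and then computes the entries of $D$ from the slope identities to identify the reduced matrix with $J$. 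You instead establish the unconditional inclusion ${\rm Ker}(J)\subseteq{\rm Ker}(T')$ as a one-line consequence of $U_{ij}+V_{ij}=1$, and settle both implications by a single rank/dimension count, using saturation to upgrade equality of $\mb{Q}$-kernels to equality of $\mb{Z}$-kernels. This buys robustness (no dependence on the shape of an echelon form) and makes the logical structure of the equivalence transparent. One small correction to your closing remark: centrality of complex conjugation in ${\rm Gal}(K/\mb{Q})$ is not what is needed; it suffices that conjugation on the CM field $C(A)$ commutes with all embeddings $C(A)\rightarrow\mb{C}$, which yields both identities $s_{\sigma_j\pi_0}(\mf{p}_i)+s_{\iota\sigma_j\pi_0}(\mf{p}_i)=1$ and $s_{\sigma_j\pi_0}(\mf{p}_i)+s_{\sigma_j\pi_0}(\iota\mf{p}_i)=1$ directly, and their mutual consistency is automatic.
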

\begin{proof}
(1) From the equation $s_{\sigma_j \pi_0}(\mf{p}_i)+s_{\sigma_j\pi_0}(\iota \mf{p}_i)=1$ for all $i,j$, we easily see that the matrix $T^{\prime}$ is column equivalent to the $d\times 2r$ matrix 
\begin{align*}
\begin{pmatrix}
C&0_{d\times (r-1)}
\end{pmatrix},
\ \ \ C=
\begin{pmatrix}
&1\\
U&\vdots\\
&1
\end{pmatrix}
\end{align*}
where $0_{d\times (r-1)}$ is the $d \times (r-1)$ matrix with all entries equal to $0$. 

Since the complex conjugation $\iota \in G$ acts on the set $\{\mf{p}_1,\dots,\mf{p}_d\}$, after renumbering the $\mf{p}_i$ if necessary,  there is a positive integer $t$ such that 
\begin{align*}
\iota \mf{p}_i&=\mf{p}_{i+t} \ \ \ \ \text{for $1\leq i\leq t$},\\
\iota \mf{p}_i&=\mf{p}_i \ \ \ \ \ \ \ \text{for $2t+1\leq i \leq d$}.
\end{align*}
From this, we obtain that for each $j$,
\begin{equation}\label{s}
\begin{aligned}
s_{\sigma_j \pi_0}(\mf{p}_i)+s_{\sigma_j\pi_0}(\mf{p}_{i+t})&=1\ \ \ \ \ \ \  \text{for $1\leq i\leq t$},\\
s_{\sigma_j \pi_0}(\mf{p}_i)&=\frac{1}{2} \ \ \ \ \ \ \text{for $2t+1\leq i \leq d$}.
\end{aligned}
\end{equation}
These equations show that the matrix $C$ is row equivalent to the matrix
\begin{align*}
\begin{pmatrix}
U^{\prime}&0_{(t+1)\times (r-1)}\\
0_{(d-t+1)\times (r+1)}&0_{(d-t+1)\times (r-1)}
\end{pmatrix},
\ \ \ U^{\prime}=
\begin{pmatrix}
&&&0\\
&U^{''}&&\vdots\\
&&&0\\
1&\cdots&1&2
\end{pmatrix}
\end{align*}
where $U^{''}$ is the $t\times r$ matrix $(s_{\sigma_j \pi_0}(\mf{p}_i)-\frac{1}{2})$ ($1\leq i\leq t, 1\leq j\leq r$).

On the other hand, from \eqref{s} we also obtain that the matrix $T_A$ is row equivalent to the matrix
\begin{align*}
\begin{pmatrix}
&U^{''}&\\
&0_{(d-t)\times r}
\end{pmatrix}.
\end{align*}
Hence
\begin{align*}
{\rm rank} \ T^{\prime}={\rm rank} \ C ={\rm rank} \ U^{''} +1={\rm rank} \ T_A+1.
\end{align*}

(2) If ${\rm Ker}(J)={\rm Ker}(T^{\prime})$, then clearly ${\rm rank} \ T^{\prime} =r+1$.

Conversely assume that ${\rm rank} \ T^{\prime}=r+1$. Then $T^{\prime}$ is row equivalent to a matrix of the following form
\begin{align*}
\begin{pmatrix}
I_{r+1}&D\\
0_{(d-r-1)\times (r+1)}&0_{(d-r-1)\times (r-1)}
\end{pmatrix}.
\end{align*}
Put $D=(b_{ij})$. From the equation $s_{\sigma_j \pi_0}(\mf{p}_i)+s_{\iota\sigma_j\pi_0}(\mf{p}_i)=1$, we have
\begin{align*} 
1+b_{ii}&=1 \ \ \text{if $i\neq1,r+1$},\\
0+b_{ij}&=1 \ \ \text{otherwise}.
\end{align*}
Hence the matrix $(I_{r+1} \ D)$ is row equivalent to the matrix $J$, which implies ${\rm Ker}(T^{\prime})={\rm Ker}((I_{r+1}\ D))={\rm Ker}(J)$.
\end{proof}

\subsection{Calculation in special case}
Applying Theorem \ref{t1} and Theorem \ref{t2}, we have the following:
\begin{thm}\label{t3}
Let $A$ be a simple abelian variety over $\mb{F}$ of dimension $\geq2$. Assume that $C(A)/\mb{Q}$ is abelian with Galois group $G$. Then Then all $\ell$-adic Tate classes on all powers of $A$ are Lefschetz if and only if for any character $\varphi$ of $G$ with $\varphi(\iota)=-1$,
\begin{align*}
\sum_{\sigma \in G}e(\sigma)\varphi(\sigma)\neq0.
\end{align*}
Here $e(\sigma)=s_{\pi_0}(\sigma\mf{p})-\frac{1}{2}$ where $\mf{p}$ is a prime of $C(A)$ dividing $p$.

In particular, all $\ell$-adic Tate classes on all powers of $A$ are Lefschetz if one of the following condition holds{\rm :}

{\rm (1)} $G$ is cyclic of order $2^{s+1}$ $(s\geq0)$,

{\rm (2)} $G$ is cyclic of order $2^{s+1}l$ with $l$ odd prime and the order of $\en^0_{\mb{F}}(A)$ in the Brauer group ${\rm Br}(C(A))$ of $C(A)$ is odd.
\end{thm}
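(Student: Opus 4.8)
The plan is to derive everything from the rank criterion already established. By Theorem~\ref{t1}(ii) and Theorem~\ref{t2}, all $\ell$-adic Tate classes on all powers of $A$ are Lefschetz if and only if ${\rm rank}\,T_A=r$, i.e. the $r$ columns of $T_A$ are linearly independent over $\mb{Q}$. Since $C(A)/\mb{Q}$ is abelian it is Galois with group $G$, so I take $K=C(A)$ and identify $\Sigma_{C(A)}$ with $G$. Fix a prime $\mf{p}$ of $C(A)$ above $p$ with decomposition group $D$; the primes $\mf{p}_i$ above $p$ correspond to the cosets $G/D$, and writing $\mf{p}_i=\tau_i\mf{p}$ the entries become $a_{ij}=s_{\sigma_j\pi_0}(\mf{p}_i)-\tfrac12=s_{\pi_0}(\sigma_j^{-1}\tau_i\mf{p})-\tfrac12=e(\sigma_j^{-1}\tau_i)$, where $e$ is extended to $G$ by $e(g)=s_{\pi_0}(g\mf{p})-\tfrac12$ (well defined since $e$ is constant on the cosets $gD$). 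The weight-one relation $s_{\pi_0}(g\mf{p})+s_{\pi_0}(\iota g\mf{p})=1$ gives $e(\iota g)=-e(g)$, so $e$ is odd.

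For $x=(x_j)\in\mb{Q}^r$ extend it to the odd function $X\colon G\to\mb{Q}$ with $X(\sigma_j)=x_j$, $X(\iota\sigma_j)=-x_j$. Pairing $\sigma_j$ with $\iota\sigma_j$ and using that $e$ and $X$ are both odd,
\begin{align*}
(e*X)(\tau):=\sum_{\sigma\in G}e(\sigma^{-1}\tau)X(\sigma)=2\sum_{j=1}^r e(\sigma_j^{-1}\tau)\,x_j .
\end{align*}
As $e$ is constant on the cosets $gD$, so is $e*X$; hence $(e*X)(\tau_i)=0$ for all $i$ precisely when $e*X\equiv0$ on $G$, and thus $T_Ax=0$ if and only if $e*X\equiv0$. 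By the convolution theorem on the finite abelian group $G$, $\sum_{\tau}(e*X)(\tau)\varphi(\tau)=S(\varphi)\,S_X(\varphi)$ with $S(\varphi)=\sum_{\sigma\in G}e(\sigma)\varphi(\sigma)$ and $S_X(\varphi)=\sum_{\sigma\in G}X(\sigma)\varphi(\sigma)$; oddness of $e$ and $X$ makes both $S$ and $S_X$ vanish on the characters with $\varphi(\iota)=1$. Therefore $e*X\equiv0$ iff $S(\varphi)S_X(\varphi)=0$ for every odd $\varphi$. If $S(\varphi)\neq0$ for all odd $\varphi$, then $T_Ax=0$ forces $S_X\equiv0$, hence $X\equiv0$ and $x=0$, so ${\rm rank}\,T_A=r$; conversely, if $S(\varphi_0)=0$ for some odd $\varphi_0$, the real odd function $X$ with $S_X$ supported on $\{\varphi_0,\overline{\varphi_0}\}$ (both odd, both killed by $S$ since $e$ is rational) gives a nonzero $x$ with $T_Ax=0$. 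This proves the stated equivalence.

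I then expand $S(\varphi)$ along the cyclic $2$-part. In case (1), $G=\langle g\rangle$ is cyclic of order $2^{s+1}$ and $\iota=g^{2^s}$, so the odd characters are the faithful ones; writing $\omega=\varphi(g)$ (a primitive $2^{s+1}$-th root, $\omega^{2^s}=-1$) and using $e(\iota g)=-e(g)$ yields $S(\varphi)=(1-\omega^{2^s})Q(\omega)=2Q(\omega)$ with $Q(X)=\sum_{m=0}^{2^s-1}e(g^m)X^m$ of degree $<\phi(2^{s+1})$. Since $\Phi_{2^{s+1}}$ is the minimal polynomial of $\omega$, $Q(\omega)=0$ would force $e\equiv0$; but a simple $A$ with $\dim A\geq2$ is not supersingular, so some slope is $\neq\tfrac12$ and $e\not\equiv0$, giving (1). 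For case (2) write $G\cong\mb{Z}/2^{s+1}\times\mb{Z}/l$; every odd character is $\varphi=\varphi'\otimes\varphi''$ with $\varphi'$ faithful on the $2$-part. Fixing $\varphi''$ and varying $\varphi'$, the same computation applied to the odd function $E_{\varphi''}(\sigma')=\sum_{\sigma''}e(\sigma',\sigma'')\varphi''(\sigma'')$ on $\mb{Z}/2^{s+1}$ shows $S(\varphi)\neq0$ for all faithful $\varphi'$ iff $E_{\varphi''}\not\equiv0$, so it suffices to prove $E_{\varphi''}\not\equiv0$ for every $\varphi''$. Here the hypothesis enters: the odd order of $[\en^0_{\mb{F}}(A)]$ in ${\rm Br}(C(A))$, together with the description of its local invariants above $p$ in terms of the slopes $s_{\pi_0}(v)$, forces every slope to have odd denominator, so $2e(\sigma)=2s_{\pi_0}(\sigma\mf{p})-1$ is a unit in $\mb{Z}_{(2)}$ for all $\sigma$. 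For $\varphi''=1$, $2E_1(\sigma')=\sum_{\sigma''}2e(\sigma',\sigma'')$ is a sum of $l$ units, hence $\equiv l\equiv1\pmod2$ and nonzero; for $\varphi''$ of order $l$, since the only $\mb{Q}$-linear relation among the $l$-th roots of unity is $\sum\zeta_l^k=0$, one gets $E_{\varphi''}(\sigma')=0$ exactly when $\sigma''\mapsto e(\sigma',\sigma'')$ is constant.

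The main obstacle is to exclude this last degeneracy, i.e. that $e$ is independent of the prime-to-$2$ coordinate. The plan is: if it were, then $s_{\sigma\pi_0}=s_{\pi_0}$ for every $\sigma$ in the $\mb{Z}/l$-factor, so by the fact that the slope function determines a Weil number up to a root of unity, $\sigma\pi_0=\zeta_\sigma\pi_0$ with $\zeta_\sigma$ a root of unity in $C(A)$. Since $\pi_0$ generates $C(A)$ and $\sigma$ has order $l$, tracking the cocycle $\sigma\mapsto\zeta_\sigma$ should force $C(A)$ to contain roots of unity incompatible either with the cyclicity of $G$ (when $l-1$ is not a power of $2$) or, in the remaining Fermat-prime case, with the oddness of the period of $[\en^0_{\mb{F}}(A)]$. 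Turning this degeneracy of the slope function into a genuine arithmetic contradiction under the odd-period hypothesis is the delicate point I expect to require the most care.
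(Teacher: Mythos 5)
Your reduction to the rank criterion, the convolution/Fourier argument on $G$ (equivalent to the paper's use of the Dedekind group determinant), and case (1) are all correct; in case (2) your treatment of $\varphi''=1$ via the observation that $2e(\sigma)$ is a unit in $\mb{Z}_{(2)}$ when the Brauer period is odd is a clean repackaging of the paper's invariant computation. However, there is a genuine gap at the last step of case (2): you reduce to excluding the degeneracy that $\sigma''\mapsto e(\sigma',\sigma'')$ is constant, i.e.\ that the slope function $e$ is invariant under an element $\tau\in G$ of order $l$, and you only sketch a plan for this ("tracking the cocycle $\sigma\mapsto\zeta_\sigma$", with a case split on whether $l-1$ is a power of $2$ and an appeal to the odd-period hypothesis). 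That plan is both incomplete and pointed in the wrong direction: the exclusion is unconditional and needs no Fermat-prime analysis and no Brauer hypothesis.

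The missing idea is the paper's Lemma \ref{lw}. If $e$ is $\tau$-invariant then $\mathrm{ord}_{\tau v}(\pi_0)=\mathrm{ord}_v(\pi_0)$ for every $v\mid p$, so $\tau$ fixes the ideal $(\pi_0)$ (it is prime to $p$ away from $p$), hence $\pi_0=u\cdot\tau\pi_0$ with $u$ a unit all of whose archimedean absolute values equal $1$, i.e.\ a root of unity; thus $\tau$ fixes $\mb{Q}(\pi_0^m)$ for some $m>0$. The decisive point, which your cocycle plan does not isolate, is that $\mb{Q}(\pi_0^m)=\mb{Q}[\pi_A]=C(A)$ because $\pi_0^m$ represents the same Weil germ and $C(A)$ is the \emph{smallest} field containing a representative of that germ (this uses the normalization $\en^0_{\mb{F}_q}(A_0)=\en^0_{\mb{F}}(A)$). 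Hence $\tau$ acts trivially on $C(A)$, contradicting $\tau\neq 1$ in $G=\mathrm{Gal}(C(A)/\mb{Q})$. The same lemma also replaces your "not supersingular" appeal in case (1), though that appeal is itself valid. Until this step is supplied, case (2) is not proved.
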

To prove this theorem, we need the following proposition:
\begin{prop}\label{p1}
Let $A$ be a simple abelian variety over $\mb{F}$. Assume that $C(A)$ is Galois over $\mb{Q}$. Let $G$ be the Galois group of $C(A)$ over $\mb{Q}$. Let $\mf{p}$ be a prime ideal of $C(A)$ lying over $p$. If the decomposition group $G_{\mf{p}}$ of $\mf{p}$ in $C(A)$ is a normal subgroup of $G$, then $G_{\mf{p}}=1$, namely $p$ is completely decomposed in $C(A)$.
\end{prop}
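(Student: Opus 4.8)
The plan is to work throughout with the Frobenius $\pi_0$ of the chosen model $A_0/\mb{F}_q$, which by the standing assumption $\en^0_{\mb{F}}(A)=\en^0_{\mb{F}_q}(A_0)$ generates $C(A)=\mb{Q}[\pi_0]$ over $\mb{Q}$; write $K=C(A)$ and $G=\mathrm{Gal}(K/\mb{Q})$. If $K=\mb{Q}$ there is nothing to prove, so assume $K\neq\mb{Q}$. The first observation is that, because $H:=G_{\mf{p}}$ is assumed normal, every prime $\mf{q}$ of $K$ above $p$ has the same decomposition group: indeed $G_{\mf{q}}=\sigma G_{\mf{p}}\sigma^{-1}=H$ for any $\sigma\in G$ with $\sigma\mf{p}=\mf{q}$. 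Consequently every $\tau\in H$ fixes every prime of $K$ above $p$.

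Next I would show that each $\tau\in H$ moves $\pi_0$ only by a root of unity. Since $\tau^{-1}\in H$ fixes $\mf{q}$, one has $s_{\tau\pi_0}(\mf{q})=s_{\pi_0}(\tau^{-1}\mf{q})=s_{\pi_0}(\mf{q})$ for every $\mf{q}\mid p$; thus the Weil $q$-numbers $\pi_0$ and $\tau\pi_0$ have identical slope functions, and since the slope function determines a Weil number up to a root of unity, $\tau\pi_0=\zeta_\tau\pi_0$ for some root of unity $\zeta_\tau\in K$. (Alternatively, $\tau\pi_0/\pi_0$ has valuation $0$ at every finite prime — at the primes above $p$ by the previous step, and elsewhere because $\pi_0$ is a unit there — while $|\phi(\tau\pi_0/\pi_0)|=1$ at every archimedean place, so it is a root of unity by Kronecker's theorem.)

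The crux is then to upgrade $\tau\pi_0=\zeta_\tau\pi_0$ to $\tau\pi_0=\pi_0$, and this is the step I expect to be the main obstacle, since the slope data sees $\pi_0$ only up to roots of unity. The idea is to kill the root of unity by base change. Suppose $\zeta_\tau$ has order $m$. Applying $\tau$ to $\pi_0^m$ gives $\tau(\pi_0^m)=(\zeta_\tau\pi_0)^m=\pi_0^m$, so $\pi_0^m$ lies in the fixed field $K^{\langle\tau\rangle}$; if $\tau\neq1$ this field is strictly smaller than $K$, whence $\mb{Q}(\pi_0^m)\subsetneq K$. On the other hand $\pi_0^m$ is the Frobenius of $A_0$ regarded over $\mb{F}_{q^m}$, and $A_0$ is still simple there (a decomposition over $\mb{F}_{q^m}$ would give one over $\mb{F}$) with $\en^0_{\mb{F}_{q^m}}(A_0)=\en^0_{\mb{F}}(A)$, the endomorphism algebra being sandwiched between two copies of itself. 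Hence the same principle that yields $C(A)=\mb{Q}[\pi_0]$ now yields $C(A)=\mb{Q}[\pi_0^m]$, so $\mb{Q}(\pi_0^m)=K$, contradicting $\mb{Q}(\pi_0^m)\subsetneq K$. Therefore $\zeta_\tau=1$, so $\tau$ fixes the generator $\pi_0$ of $K$ and $\tau=1$. This proves $H=1$, that is, $p$ is completely decomposed in $C(A)$. The one point demanding care is the base-change step: one must be sure that passing to $\mb{F}_{q^m}$ preserves both simplicity and the full endomorphism algebra, so that $\mb{Q}[\pi_0^m]$ really is the whole center.
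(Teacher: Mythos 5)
Your argument is correct and follows essentially the same route as the paper: normality forces each $\tau\in G_{\mf{p}}$ to fix the ideal $(\pi_0)$, hence $\tau\pi_0=\zeta\pi_0$ for a root of unity $\zeta$, and raising to the $m$-th power contradicts the fact that $\pi_0^m$ still generates $C(A)$. The paper packages this last step as Lemma \ref{lw} on Weil germs, where $\mb{Q}[\pi]$ is by definition the smallest field containing a representative of the germ, whereas you rederive that same fact via base change to $\mb{F}_{q^m}$ and Tate's theorem; the substance is identical.
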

\begin{proof}
Let a prime decomposition of $\pi_0$ in $C(A)$ be as follows:
$$(\pi_0)=\prod_{\sigma \in G/G_{\mf{p}}}\sigma(\mf{p})^{e_{\sigma}}.$$
Let $\tau$ be an element of $G_{\mf{p}}$. Since $G_{\mf{p}}$ is normal, we have $(\pi_0)=(\tau\pi_0)$ as ideals. From Lemma \ref{lw} below, we obtain that $\tau =1$ and that $G_{\mf{p}}=1$.
\end{proof}
\begin{lem}\label{lw}
Let $\pi$ be a Weil germ. Let $\pi_0\in \mb{Q}[\pi]$ be a representative of $\pi$. Let $K$ be a Galois extension of $\mb{Q}$ such that $\mb{Q}[\pi]\subset K$. Then there is no elements $\sigma \in {\rm Gal}(K/\mb{Q})$ satisfying the following conditions{\rm :}

\textup{(1)} $\sigma$ fix the ideal $(\pi_0)$,

\textup{(2)} $\sigma$ is not trivial on $\mb{Q}[\pi]$. 
\end{lem}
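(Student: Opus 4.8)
The plan is to show that condition (1) forces $\sigma$ to move $\pi_0$ by at most a root of unity, then to remove that root of unity by passing to a suitable power of $\pi_0$ representing the same germ; condition (2) is thereby contradicted.

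First I would reinterpret condition (1) in terms of slope functions. Since $\pi_0$ is a unit at every prime of $K$ not dividing $p$, the ideal $(\pi_0)$ is supported only above $p$. The hypothesis that $\sigma$ fixes $(\pi_0)$ means $(\sigma\pi_0)=(\pi_0)$ as ideals of $K$, i.e.\ ${\rm ord}_{\mf{P}}(\sigma\pi_0)={\rm ord}_{\mf{P}}(\pi_0)$ for every prime $\mf{P}$ of $K$; dividing by ${\rm ord}_{\mf{P}}(q)$ at the primes above $p$ shows that $\sigma\pi_0$ and $\pi_0$ have the same slope function. As $\sigma$ is a field automorphism, $\sigma\pi_0$ is again a Weil $q$-number, so by the fact that the slope function determines a Weil $q$-number up to a root of unity I conclude $\sigma\pi_0=\zeta\pi_0$ for some root of unity $\zeta\in K$.

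Next I would absorb $\zeta$ into a power. Let $e$ be the order of $\zeta$. Then $\sigma(\pi_0^e)=\zeta^e\pi_0^e=\pi_0^e$, so $\sigma$ fixes $\pi_0^e$ and therefore acts trivially on $\mb{Q}[\pi_0^e]$. Since $\pi_0^e$ is equivalent to $\pi_0$, it is again a representative of the germ $\pi$. Now I identify the fields: because $\mb{Q}[\pi]$ is the smallest subfield of $\mb{Q}^{\rm al}$ containing a representative of $\pi$, we get $\mb{Q}[\pi]\subseteq\mb{Q}[\pi_0^e]$; the hypothesis $\pi_0\in\mb{Q}[\pi]$ gives $\mb{Q}[\pi_0]\subseteq\mb{Q}[\pi]$; and trivially $\mb{Q}[\pi_0^e]\subseteq\mb{Q}[\pi_0]$. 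Chaining these inclusions forces $\mb{Q}[\pi]=\mb{Q}[\pi_0]=\mb{Q}[\pi_0^e]$, so $\sigma$, which fixes $\mb{Q}[\pi_0^e]$ pointwise, is trivial on $\mb{Q}[\pi]$. This contradicts (2), proving that no $\sigma$ can satisfy both conditions.

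I expect the delicate step to be the last one. It is tempting to try to conclude $\zeta=1$ directly, but this need not hold for $\pi_0$ itself; what rescues the argument is that raising to the power $e$ keeps us inside the same germ, while the minimality built into the definition of $\mb{Q}[\pi]$ guarantees that $\mb{Q}[\pi_0^e]$ has not dropped below $\mb{Q}[\pi]=\mb{Q}[\pi_0]$. Keeping this germ-versus-field bookkeeping straight — and in particular using the hypothesis $\pi_0\in\mb{Q}[\pi]$ precisely to pin down $\mb{Q}[\pi]=\mb{Q}[\pi_0]$ — is where the care is needed.
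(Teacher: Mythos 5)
Your proof is correct and follows essentially the same route as the paper: deduce from $(\sigma\pi_0)=(\pi_0)$ that $\sigma\pi_0=\zeta\pi_0$ for a root of unity $\zeta$ (the paper gets this by noting the unit $u=\pi_0/\sigma\pi_0$ has all archimedean absolute values $1$, hence is a root of unity; your slope-function phrasing is the same fact), then pass to $\pi_0^e$ and use the minimality of $\mb{Q}[\pi]$ together with $\pi_0\in\mb{Q}[\pi]$ to force $\mb{Q}[\pi]=\mb{Q}(\pi_0^e)$ and contradict (2). The "germ-versus-field bookkeeping" you flag as delicate is exactly the step the paper makes as well.
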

\begin{proof}
We assume that there is an element $\sigma \in {\rm Gal}(K/\mb{Q})$ satisfying conditions (1) and (2). From condition (1), there is an unit $u$ of the integer ring of $K$ such that $\pi_0=u\cdot\sigma\pi_0$. For any $\tau \in {\rm Gal}(K/\mb{Q})$, we have $|\tau u|=1$ since $|\tau \pi_0|=q^{1/2}$. Here we used that $\pi_0$ is a Weil $q$-number. Hence $u$ is a root of unity. Therefore we have $\pi_0^m=\sigma \pi_0^m$ for some $m>0$. Since $\sigma $ acts on the subfield $\mb{Q}(\pi_0^m)$ of $\mb{Q}[\pi]$ trivially, $\mb{Q}(\pi_0^m)$ is not equal to $\mb{Q}[\pi]$ by condition (2). 

On the other hand, since $\mb{Q}[\pi]$ is the smallest field containing a representative of $\pi$, we obtain that $\mb{Q}[\pi]\subset \mb{Q}(\pi_0^m)$. Hence $\mb{Q}(\pi_0^m)=\mb{Q}[\pi]$ which is a contradiction.
\end{proof}
\renewcommand{\proofname}{\textit{Proof of Theorem \ref{t3}}}
\begin{proof}
By Theorem \ref{t1} and Theorem \ref{t2}, our task is reduced to calculate the matrix $T_A$. Since $C(A)$ is Galois over $\mb{Q}$, we may take $K=C(A)$. Put $G=\{\sigma_1,\dots,\sigma_{r},\iota\sigma_1,\dots,\iota\sigma_{r}\}$. Here $\iota \in G$ is the complex conjugate on $C(A)$. Let $\mf{p}$ be a prime of $C(A)$ dividing $p$. By Proposition \ref{p1}, the set $\{\sigma\mf{p} \;|\; \sigma \in G\}$ is the set of all primes of $C(A)$ dividing $p$. Let $e$ be the function $G\longrightarrow \mb{C}$ defined as $e(\sigma)=s_{\pi_0}(\sigma\mf{p})-\frac{1}{2}$ for $\sigma \in G$. From the proof of Theorem \ref{t2}, the matrix $T_A$ is row equivalent to the matrix
\begin{align*}
\begin{pmatrix}
&U^{''}&\\
&0_{r\times r}
\end{pmatrix}
\end{align*}
where $U^{''}$ is the $r\times r$ matrix $\big(e(\sigma_i\sigma_j^{-1})\big)$ ($1\leq i, j\leq r$). 
Since ${\rm rank} \ T_A={\rm rank} \ U^{''}$, we obtain that ${\rm rank} \ T_A=r$ if and only if $\det (U^{''})\neq 0$. 

Now we calculate $\det (U^{''})$. Let $\phi$ be the fixed character of $G$ with $\phi(\iota)=-1$. Let $f:G\longrightarrow \mb{C}$ be the function defined as $f(\sigma)=\phi(\sigma)e(\sigma)$ for $\sigma \in G$. Then we have
\begin{align*}
\det (U^{''})=\det \Big(\big(f(\sigma_i\sigma_j^{-1})\big)\Big).
\end{align*}
For any $\sigma \in G$, we have
$$f(\iota \sigma)=\phi(\iota\sigma)e(\iota\sigma)=-\phi(\sigma)(-e(\sigma))=f(\sigma).$$
Hence $f$ is a function on $G^{'}:=G/\{1, \iota\}$.
Now we need the following lemma:
\begin{lem}
Let $G$ be a finite abelian group and let $f$ be a function on $G$ with values in some field of characteristic $0$. Then
\begin{align*}
\det \big(f(\sigma \tau^{-1})\big)_{\sigma, \tau \in G}=\prod_{\psi} \sum_{\sigma \in G}f(\sigma)\psi(\sigma).
\end{align*}
Here $\psi$ runs over all character of $G$.
\end{lem}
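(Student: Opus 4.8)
The plan is to recognize the matrix $M:=\big(f(\sigma\tau^{-1})\big)_{\sigma,\tau\in G}$ as the matrix of convolution by $f$ on the group algebra of $G$ — a $G$-circulant — and to diagonalize it simultaneously using the characters of $G$. Since $G$ is finite abelian of order $n=|G|$, it has exactly $n$ characters, and these will provide a full eigenbasis. The values $\psi(\sigma)$ appearing on the right-hand side are $n$-th roots of unity and need not lie in the ground field $k$; but the characters take values in $\overline{k}$, and enlarging $k$ to contain $\mu_n$ alters neither side of the asserted identity, so it is harmless to assume from the outset that $k$ contains the $n$-th roots of unity (in the application $k=\mb{C}$).

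First I would exhibit the eigenbasis explicitly. For each character $\psi$ set $v_\psi:=\big(\psi(\tau)\big)_{\tau\in G}$. Using that $G$ is abelian and reindexing the sum by $\rho:=\sigma\tau^{-1}$ (so that $\tau=\rho^{-1}\sigma$ runs over $G$ as $\tau$ does), one computes
\begin{align*}
(Mv_\psi)_\sigma=\sum_{\tau\in G}f(\sigma\tau^{-1})\psi(\tau)=\sum_{\rho\in G}f(\rho)\psi(\rho^{-1})\psi(\sigma)=\lambda_\psi\,\psi(\sigma),
\end{align*}
so $v_\psi$ is an eigenvector of $M$ with eigenvalue $\lambda_\psi:=\sum_{\rho\in G}f(\rho)\psi(\rho)^{-1}$.

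Next I would assemble these into a genuine diagonalization. Let $P$ be the $n\times n$ matrix whose columns are the $v_\psi$, that is $P_{\tau,\psi}=\psi(\tau)$. The orthogonality relations for characters show that $P$ is invertible (equivalently, the distinct characters are a linearly independent family of functions on $G$), and the eigenvalue identity above reads $MP=P\Lambda$ with $\Lambda=\mathrm{diag}(\lambda_\psi)$. Hence $\det M=\det\Lambda=\prod_\psi\lambda_\psi$. Finally, reindexing the product over the character group by the bijection $\psi\mapsto\psi^{-1}$ turns $\prod_\psi\lambda_\psi=\prod_\psi\sum_\rho f(\rho)\psi(\rho)^{-1}$ into $\prod_\psi\sum_\sigma f(\sigma)\psi(\sigma)$, which is exactly the right-hand side of the lemma.

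I expect no serious obstacle here: the invertibility of $P$ is precisely character orthogonality, and the remainder is routine linear algebra over a field containing $\mu_n$. The one place to watch is the eigenvalue computation — specifically, keeping track of whether $\psi$ or $\psi^{-1}$ occurs — since it is exactly the reindexing $\psi\mapsto\psi^{-1}$ of the character group that reconciles the eigenvalue $\sum_\rho f(\rho)\psi(\rho)^{-1}$ with the stated factor $\sum_\sigma f(\sigma)\psi(\sigma)$.
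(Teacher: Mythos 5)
Your proof is correct: the eigenvalue computation, the invertibility of the character matrix $P$ via orthogonality, and the final reindexing $\psi\mapsto\psi^{-1}$ are all sound, and the remark about harmlessly enlarging the base field to contain the roots of unity takes care of the only delicate point. The paper itself gives no proof but simply cites Washington's \emph{Introduction to Cyclotomic Fields} (Lemma 5.26), whose argument is essentially this same diagonalization of the group determinant by characters, so your write-up matches the intended proof.
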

For the proof of this lemma, see \cite[Lemma 5.26]{Wa}.
 
From this lemma, we have
\begin{align}\label{d}
\det \big(f(\sigma \tau^{-1})\big)_{\sigma, \tau \in G^{'}}=\prod_{\psi} \sum_{\sigma \in G^{'}}f(\sigma)\psi(\sigma).
\end{align}
Here $\psi$ runs over all character of $G^{'}$. Furthermore, by elementary calculation, we have
$$\text{RHS of \eqref{d}}=\frac{1}{2}\prod_{\varphi} \sum_{\sigma \in G}e(\sigma)\varphi(\sigma).$$
Here $\varphi$ runs over all character of $G$ with $\varphi(\iota)=-1$. By the above argument, ${\rm rank} \ T_A=r$ if and only if for any such $\varphi$,
\begin{align*}
\sum_{\sigma \in G}e(\sigma)\varphi(\sigma)\neq0.
\end{align*}
This completes the proof of the first assertion of Theorem \ref{t3}.
 
Now we put $\displaystyle E(\varphi):=\sum_{\sigma \in G}e(\sigma)\varphi(\sigma)$ and prove that $E(\varphi)\neq0$ in some cases. Let $g$ be a generator of $G$. In case that condition (1) holds, then $\displaystyle E(\varphi)=2\sum_{i=0}^{2^s-1}e(g^i)\varphi(g^i)$. Since $\varphi(g)$ is a primitive $2^{s+1}$-th root of unity, we have $[\mb{Q}(\zeta):\mb{Q}]=2^s$. Therefore the set $\{1,\zeta,\zeta^2,\dots,\zeta^{2^s-1}\}$ is a base of $\mb{Q}(\zeta)$ over $\mb{Q}$. Therefore if $E(\varphi)=0$, then we have
\begin{align*}
e(1)=e(g)=\cdots=e(g^{2^s-1})=0.
\end{align*}
From this, we have 
$(\pi_0)=(p^n)$ as ideal in $C(A)$ which is fixed by the action of $\iota \in G$. Here $n:={\rm ord}_{\mf{p}}(\pi_0)$. By Lemma \ref{lw}, we have $\iota=1$, which is a contradiction. Therefore $E(\varphi)\neq0$.

Next we consider in case that the order of $G$ is $2^{s+1}l$ with $l$ an odd prime. Then $\varphi(g)$ is a primitive $2^{s+1}$-th root of unity or a primitive $2^{s+1}l$-th root of unity. If $\varphi(g)$ is a primitive $2^{s+1}l$-th root of unity and if $E(\varphi)=0$, then we have
\begin{align*}
\sum_{i=0}^{r2^sl-1}e(g^i)x^i=h(x)\cdot (b_1x^{2^s-1}+b_2x^{2^s-2}+\cdots+b_{2^s-1}x+b_{2^s})
\end{align*}
for some $b_i\in \mb{Q}$. Here $h(x):=x^{2^s(l-1)}-x^{2^s(l-2)}+\cdots-x^{2^s}+1$ is the minimal polynomial to $\varphi(g)$. From this equation, we obtain that for $0\leq t\leq 2^s-1$ and for $0\leq i\leq l-1$,
\begin{align}\label{a}
e(g^{2^si+t})=(-1)^ie(g^{t}).
\end{align}
Then the ideal $(\pi_0)$ is fixed by $g^l$ because equation \eqref{a} holds. This is a contradiction by Lemma \ref{lw}. Hence $E(\varphi)\neq0$.

If $\varphi(g)$ is a 
primitive $2^{s+1}$-th root of unity and if $E(\varphi)=0$, then by a similar argument in case that $\varphi(g)$ is a primitive $2^{s+1}l$-th root of unity, for each $0\leq t\leq 2^s-1$ we have
\begin{align*}
\sum_{i=0}^{l-1} (-1)^ie(g^{2^si+t})=0.
\end{align*}

On the other hand, let $n$ be the order of $\en^0_{\mb{F}}(A)$ in the Brauer group ${\rm Br}(C(A))$ of $C(A)$. By class field theory, $n$ is the smallest integer such that $n\cdot {\rm inv}_{v}(\en^0_{\mb{F}}(A))$ belongs to $\mb{Z}$ for all prime $v$ of $C(A)$. We here have
\begin{align*}
{\rm inv}_{v}(\en^0_{\mb{F}}(A))=s_{\pi}(v)[C(A)_v:\mb{Q}_p].
\end{align*}
By Proposition \ref{p1}, $[C(A)_v:\mb{Q}_p]=1$. Hence, for any $1\leq t\leq 2^s$ we have
\begin{align*}
n\cdot\sum_{i=0}^{l-1} (-1)^ie(g^{2^si+t})\equiv \frac{n}{2} \mod \mb{Z}.
\end{align*}
Since $n$ is odd from the assumption, $\displaystyle \sum_{i=0}^{l-1} (-1)^ie(g^{2^si+t})\neq0$. Therefore $E(\varphi)\neq0$.
\end{proof}
\renewcommand{\proofname}{\textit{Proof}}

\section{Proof of Theorem \ref{t}}
We here prove Theorem \ref{t}. We introduce some notation: For any $c \in (\mb{Z}/l)^{\times}$, we denote by $\langle c\rangle$ the least natural number such that $\langle c\rangle\equiv c \mod l$. We write $H$ for the subgroup of $(\mb{Z}/l)^{\times}$ generated by $p$. We identify $(\mb{Z}/l)^{\times}$ with the Galois group of $\mb{Q}(\mu_l)/\mb{Q}$.
\renewcommand{\proofname}{\textit{Proof of Theorem \ref{t}}}
\begin{proof}
By a result of Shioda-Katsura mentioned in Introduction, we may assume that the residual degree $f$ of $p$ in $\mb{Q}(\mu_l)$ is odd and that $\dim A\ge2$. By Gonz\'alez's result \cite[Theorem 3.3]{G}, we have
\begin{align}
H_{\alpha}=\Big\{ c \in (\mb{Z}/l)^{\times} \;\Big|\; \sum_{h \in H}\sum_{i=0}^2\langle hca_i\rangle=\sum_{h \in H}\sum_{i=0}^2\langle ha_i\rangle\Big\}.
\end{align}
Let $G$ be the Galois group of $C(A)/\mb{Q}$. Then we have
$$G\simeq(\mb{Z}/l)^{\times}/H_{\alpha}.$$
Let $\iota \in G$ be the complex conjugation on $C(A)$. Then to give a character $\varphi$ of $G$ with $\varphi(\iota)=-1$ is equivalent to give a odd Dirichlet character $\chi$ of $(\mb{Z}/l)^{\times}$ with $\chi|_{H_{\alpha}}=1$. By Theorem \ref{t3}, it suffices to show that for any odd Dirichlet character $\chi$ of $(\mb{Z}/l)^{\times}$ with $\chi|_{H_{\alpha}}=1$, $\displaystyle \sum_{\sigma \in G}e(\sigma)\chi(\sigma)=0$ if and only if $\displaystyle \sum_{i=0}^2\chi(a_i)=0$. Here $e(\sigma)=s_{j(\alpha)}(\sigma \mf{p})-\frac{1}{2}$ where $\mf{p}$ is a fixed prime of $C(A)$ dividing $p$.

Therefore for a such Dirichlet character $\chi$, we calculate $\displaystyle \sum_{\sigma \in G}e(\sigma)\chi(\sigma)$. We first consider $e(\sigma)$. Let $\mf{q}$ be a prime of $\mb{Q}(\mu_l)$ dividing $\mf{p}$. Then we have 
$$e(\sigma)=s_{j(\alpha)}(c\mf{q})-\frac{1}{2}.$$ 
Here $c \in (\mb{Z}/l)^{\times}$ is a representative of $\sigma$. Hence we have
\begin{align}\label{fe}
\sum_{\sigma \in G}e(\sigma)\chi(\sigma)=\frac{1}{d}\sum_{c \in (\mb{Z}/l)^{\times}}e(c)\chi(c),
\end{align}
where $d$ is the cardinality of $H_{\alpha}$ and $e(c)=s_{j(\alpha)}(c\mf{q})-\frac{1}{2}$. By the ideal decomposition of $j(\alpha)$ in $\mb{Q}(\mu_l)$ (cf. \cite{SK}), we have
\begin{align*}
s_{j(\alpha)}(c\mf{q})-\frac{1}{2}&=\frac{1}{f}\sum_{h \in H}\sum_{i=0}^2\Big(\frac{\langle ha_ic^{-1}\rangle}{l}-1\Big)-\frac{1}{2}\\
&=\frac{1}{fl}\sum_{i=0}^2\sum_{h \in H}\Big(\langle ha_ic^{-1}\rangle-\frac{l}{2}\Big).
\end{align*}
From this equation and \eqref{fe},
\begin{align*}
\frac{1}{d}\sum_{c \in (\mb{Z}/l)^{\times}}e(c)\chi(c)&=\frac{1}{fld}\sum_{c \in (\mb{Z}/l)^{\times}}\sum_{i=0}^2\sum_{h \in H}\Big(\langle ha_ic^{-1}\rangle-\frac{l}{2}\Big)\chi(c)\\
&=\frac{1}{fld}\sum_{i=0}^2\sum_{h \in H}\sum_{c \in (\mb{Z}/l)^{\times}}\Big(\langle ha_ic^{-1}\rangle-\frac{l}{2}\Big)\chi(c)\\
&=\frac{1}{fld}\sum_{i=0}^2\sum_{h \in H}\chi(a_i)^{-1}\sum_{c \in (\mb{Z}/l)^{\times}}\Big(\langle c\rangle-\frac{l}{2}\Big)\chi(c)^{-1}\\
&=\frac{1}{ld}\sum_{i=0}^2\chi(a_i)^{-1}\sum_{c \in (\mb{Z}/l)^{\times}}\langle c\rangle\chi(c)^{-1}.
\end{align*}
Now our task is reduced to show that $\displaystyle \sum_{c \in (\mb{Z}/l)^{\times}}\langle c\rangle\chi(c)^{-1}\ne0$. Let $L(s,\chi)$ be the $L$-series attached to $\chi$. Then $L(1,\chi)\neq0$ by \cite[Corollary 4.4]{Wa}. Furthermore by \cite[Theorem 4.9]{Wa}, we have
$$\sum_{c \in (\mb{Z}/l)^{\times}}\langle c\rangle\chi(c)^{-1}=b\cdot L(1,\chi),\;\;(b\in \mb{C}^{\times}).$$
Hence the proof of the first assertion of the theorem is complete.

Next we consider in case (1). Let $g$ be a generator of $(\mb{Z}/l)^{\times}$. Let $\chi$ be an odd Dirichlet character of $(\mb{Z}/l)^{\times}$. Then ${\rm Ker}(\chi)$ is a subgroup of $(\mb{Z}/l)^{\times}$ generated by $g^{m}$ for some $m|(l-1)$. Now $l-1=mk$ for some $k>0$. If $\displaystyle \sum_{i=0}^2\chi(a_i)=0$, then $1+\chi(a_1a_0^{-1})+\chi(a_2a_0^{-1})=0.$
Since $\chi(c)$ is a root of unity, by elementary computation $\chi(a_1a_0^{-1})$ is a primitive cubic root of unity. Hence $m$ is divided by $3$.  Moreover since $\chi(-1)=-1$ and $-1\equiv g^{\frac{l-1}{2}}$, we obtain that $m$ does not divide $\frac{l-1}{2}$. Hence $m$ is even and $k$ is odd. Therefore $m \equiv 0 \mod 6$. 

From the above argument, we see that if $l-1\not \equiv 0 \mod6$, then the order of $\chi$ is not divided by $6$. Therefore we may assume that $l-1\equiv 0\mod 6$. Let $H^{'}$ be the subgroup of $(\mb{Z}/l)^{\times}$ generated by $g^6$. If $[C(A):\mb{Q}]\not \equiv 0 \mod 6$,  then $H_{\alpha}\not \subset H^{'}$. For any odd character $\chi$ of $(\mb{Z}/l)^{\times}$ with $\chi|_{H_{\alpha}}=1$, we obtain that ${\rm Ker}(\chi)\not \subset H^{'}$. Therefore the order of $\chi$ is not divided by $6$. From the above argument, we see that $\displaystyle \sum_{i=0}^2\chi(a_i)\not=0$ for any such $\chi$. Hence the assertion follows from the first assertion of the theorem. 

Case (2) is easy. Let $\chi$ be an odd Dirichlet character of $(\mb{Z}/l)^{\times}$. If $\alpha=(a_0,a_1,a_2)=(a,a,b)$ up to permutation and if $\displaystyle \sum_{i=0}^2\chi(a_i)=0$, then
$\chi(a^{-1}b)=-2$. This is a contradiction because $\phi(c)$ is a root of unity.

Lastly we consider in case (3). From Theorem \ref{t3}\.(2), it suffices to show that the order $n$ of $\en^0_{\mb{F}}(A)$ in the Brauer group ${\rm Br}(C(A))$ of $C(A)$ is odd. By the proof of Theorem \ref{t3}, the order $n$ is the smallest integer $n$ such that $n\cdot s_{j(\alpha)}(v)$ belongs to $\mb{Z}$ for all prime $v$ of $C(A)$. Now $s_{j(\alpha)}(v)=\frac{m}{f}$ for some $m \in \mb{Z}$. Therefore $n$ is a divisor of $f$. Since $f$ is odd,  $n$ is also odd.
\end{proof}

\begin{cor}[Corollary \ref{cc1}]\label{cc2}
Let $l$ be an odd prime number different from $p$. Let $J(C_l)$ be the Jacobian variety of the hyperelliptic curve 
$$C_l\;:\;y^2=x^l-1.$$
Then all $\ell$-adic Tate classes on all powers of $J(C_l)$ are Lefschetz.
\end{cor}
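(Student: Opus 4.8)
The plan is to show that $J(C_l)$ is, up to isogeny over $\mb{F}$, a power of a single simple factor of the Fermat Jacobian $J_l$ whose associated triple in $\ml{A}_l^1$ has the shape $(a,a,b)$, and then to invoke case $(2)$ of Theorem \ref{t}.

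First I would record the CM structure and dispose of the degenerate cases. The order-$l$ automorphism $(x,y)\mapsto(\zeta_l x,y)$ of $C_l$ embeds $\mb{Q}(\mu_l)$ into $\en^0_{\mb{F}}(J(C_l))$, so $J(C_l)$ is of CM-type and its Frobenius eigenvalues are Jacobi sums. On the basis of eigendifferentials $x^{j-1}\,dx/y$ ($1\le j\le\tfrac{l-1}{2}$) this automorphism acts by $\zeta_l^{\,j}$, so the associated CM type is $\{1,2,\dots,\tfrac{l-1}{2}\}\subset(\mb{Z}/l)^\times$. If the residue degree $f$ of $p$ in $\mb{Q}(\mu_l)$ is even, then by Shioda--Katsura \cite{SK} $J_l$, and hence $J(C_l)$, is isogenous over $\mb{F}$ to a product of supersingular elliptic curves, so all $\ell$-adic Tate classes on all of its powers are Lefschetz by Spiess \cite{Sp}; the case of a one-dimensional simple factor reduces likewise to products of elliptic curves. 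Hence I may assume $f$ is odd and that the simple factor has dimension $\ge2$, precisely the range of Theorem \ref{t}.

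The heart of the proof is to identify these Jacobi sums. Counting the points of $y^2=x^l-1$ expresses the Frobenius eigenvalues of $J(C_l)$ as Jacobi sums $J(\rho,\chi^a)$, with $\rho$ the quadratic character and $\chi$ a character of order $l$ of $\mb{F}_{p^f}^\times$; by the Hasse--Davenport product relation each such sum equals a root of unity times $J(\chi^a,\chi^a)$, which is exactly the Fermat Jacobi sum $j(a,a,-2a)$, the conjugate of $j(1,1,-2)$ by $a\in(\mb{Z}/l)^\times$. (This matches the CM type above, since the Fermat factor attached to $(1,1,-2)$ has CM type $\{t\mid\langle t\rangle\le\tfrac{l-1}{2}\}=\{1,\dots,\tfrac{l-1}{2}\}$.) Consequently every Frobenius eigenvalue of $J(C_l)$ is a Galois conjugate of the single Weil number $\pi_A:=j(1,1,-2)$, so the characteristic polynomial of Frobenius on $H^1(J(C_l))$ is a power of the minimal polynomial of $\pi_A$; by Honda--Tate this forces $J(C_l)\sim A^{m}$, where $A$ is the simple factor of $J_l$ with $C(A)=\mb{Q}(j(1,1,-2))$ and $m=|H_{(1,1,-2)}|$. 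Note that $(1,1,-2)\in\ml{A}_l^1$ and is of the form $(a,a,b)$.

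It then remains to conclude. By case $(2)$ of Theorem \ref{t}, all $\ell$-adic Tate classes on all powers of $A$ are Lefschetz; since every power of $J(C_l)$ is isogenous to a power of $A$ and isogenies preserve Lefschetz and Tate classes, the same holds for $J(C_l)$. I expect the main obstacle to be the middle paragraph: making the eigenvalue identification fully rigorous through Hasse--Davenport (or directly through \cite{SK}), and carrying out the multiplicity bookkeeping that guarantees $J(C_l)$ is a power of the \emph{single} simple factor $A$ rather than a product of several non-isogenous factors of type $(a,a,b)$. Once this is in place the reduction to Theorem \ref{t}(2) is immediate.
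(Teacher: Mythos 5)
Your proposal is correct and follows essentially the same route as the paper: identify $J(C_l)$ (up to isogeny over $\mb{F}$) as a power of a single simple Fermat factor $A$ whose associated triple has the form $(a,a,b)$, handle the supersingular/low-dimensional cases separately, and invoke Theorem \ref{t}\,(2). The only difference is that where you derive the eigenvalue identification directly via point counting and the Hasse--Davenport product relation, the paper simply cites Gonz\'alez \cite[Theorem 3.3]{G} and Shioda \cite{Sh} for the fact that $C_l$ is a quotient of the Fermat curve and that $J(C_l)$ is a power of a simple $A$ with $C(A)=\mb{Q}(j(\alpha))$, $\alpha=(a,a,b)$.
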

\renewcommand{\proofname}{\textit{Proof}}
\begin{proof}
The hyperelliptic curve $C_l$ is a quotient of the Fermat curve $V_l$ (cf. \cite[\S5]{Sh}). By Gonz\'alez's result \cite[Theorem 3.3]{G}, $J(C_l)$ is a power of a simple abelian variety $A$ such that $C(A)=\mb{Q}(j(\alpha))$ for some $\alpha=(a,a,b)$ (cf. \cite{Sh}). Now the assertion follows from Theorem \ref{t}\.(2).
\end{proof}
\noindent\textbf{Acknowledgements} \ The author expresses his gratitude to Professors Thomas Geisser and Kanetomo Sato for many helpful suggestions and comments, and to Daisuke Shiomi for valuable discussion. Thanks are also due to the referee. In particular, Corollary \ref{cc1} was suggested by him/her.

\end{document}